\documentclass{article}
\usepackage{amsmath,amssymb}
\usepackage{hyperref}
\usepackage{amsfonts}
\usepackage{color,graphicx}
\usepackage{showkeys}
\usepackage{comment}
\newtheorem{theorem}{Theorem}

\newtheorem{assumption}[theorem]{Assumption}

\newtheorem{corollary}[theorem]{Corollary}

\newtheorem{definition}[theorem]{Definition}

\newtheorem{lemma}[theorem]{Lemma}

\newtheorem{proposition}[theorem]{Proposition}
\newtheorem{remark}[theorem]{Remark}

\newenvironment{proof}[1][Proof]{\noindent\textbf{#1.} }{\ \rule{0.5em}{0.5em}}
\begin{document}

\title{{ Ulam Approximation for Nonautonomous Systems: Equivariant Measures and Linear Response}
}

\author{    Stefano Galatolo$^{1}$\footnote{Email: \texttt{stefano.galatolo@unipi.it}}
    Valerio Lucarini$^{2,3}$\footnote{Email: \texttt{v.lucarini@leicester.ac.uk}}, Isaia Nisoli$^{4}$\footnote{Email: \texttt{nisoli@im.ufrj.br}}\\[2ex]
    $^{1}$Dipartimento di Matematica, Universit\`a di Pisa, Pisa, Italy\\
    $^{2}$School of Computing and Mathematical Sciences\\ University of Leicester, Leicester, UK\\
    $^{3}$School of Sciences, Great Bay University\\Dongguan, P.R. China \\
    $^{4}$Universidade Federal do Rio de Janeiro, Rio de Janeiro, Brasil 
}

\maketitle

\begin{abstract}
{ Despite the prevalence of nonautonomous systems in applications, their statistical properties are much less understood than in the autonomous setting. Building on recent results on response theory for nonautonomous systems, we study the approximation of equivariant families and of their linear response by Ulam-type finite-dimensional reductions. First, we show that coarse-graining procedures associated with the classical Ulam method, and more generally with suitable finite-element projections, {such as those often associated with the Koopman/Kolmogorov operator-informed analysis of dynamical systems}, provide rigorous approximation of equivariant families for sequential systems with memory loss. 
Second, for systems whose transfer operators are regularizing, we prove that the linear response of the reduced finite-state Markov model converges to the projected linear response of the original system. To the best of our knowledge, a general approximation result of this type has not previously been established in this form, even in the autonomous case. We complement the analysis with numerical experiments on simple but representative time-dependent diffusive models. These results provide a rigorous foundation for the use of Markov approximations in the study of statistical properties of nonautonomous  complex systems which  almost invariably relies on finite-scale and finite-precision descriptions of their states and dynamics.}
\end{abstract}

\noindent\textbf{Keywords:}
nonautonomous dynamical systems, transfer operators, Ulam method, linear response, equivariant measures, Markov model.

\section{Introduction}

Many systems of interest in the natural sciences, engineering, and complex systems theory are inherently {\em nonautonomous}: their effective dynamics changes in time because of external forcings, slow parameter drift, unresolved scales, or explicit control protocols. As a result, the mathematical study of nonautonomous dynamical systems has received increasing attention in recent years, both for its intrinsic interest and for its relevance in real-life applications \cite{Ashwin2026,Crisan2026}.

For such systems, the classical stationary viewpoint based on a single invariant measure is in general inadequate. As emphasized in \cite{Lucarini2026,GL26}, there is typically no time-independent reference distribution describing the system’s statistical state. Under suitable assumptions, the appropriate object is instead a time-indexed family of measures describing the snapshot statistics of the system at each instant, often formalized as a pullback, covariant, or equivariant family \cite{Bodai2013,Tel2020,Crauel1997,kloeden1997nonautonomous,kloeden2011nonautonomous,Chekroun2011}. Operationally, pullback-well posedness means that if one initializes an ensemble sufficiently far in the past and evolves it under the same time-dependent protocol, then the ensemble statistics observed at a given time eventually become essentially independent of the initial distribution. Such families provide a natural nonautonomous analogue of a reference statistical state and form the basis for perturbative and response-theoretic questions in time-dependent settings \cite{Lucarini2026,GL26}.
 Time-dependent systems allow for a broader class of critical behaviour that go beyond the classical notions of tipping points associated with bifurcations  \cite{Ashwin2012}.

{Nonautonomous dynamics is central in the modelling of temporal networks \cite{Holmes2012,Vespignani2012,Ginestra2023}, including social and spatial networks \cite{Ginestra2010},  \cite{Ginestra2024}, as well as in time-dependent consensus dynamics \cite{Leonie2021}. In ecology, nonautonomous dynamics captures seasonal forcing, climate change, and resource pulses that drive population shifts beyond equilibrium assumptions \cite{Summers2000,Jansen2000,Basak2024}. In finance and economics, it accounts for evolving volatility, policy changes, and behavioral heterogeneity, so that one can make more realistic predictions of market regime shifts \cite{Toscani2006,Chakrabarti2021,Kohlrausch2024}. Neuronal dynamics is impacted by time-dependent stimuli, synaptic plasticity, and neuromodulation, which are critical for understanding spike-timing adaptations, learning, and pathological oscillations such as epileptic seizures \cite{Horrocks2024,Bolelli2025}. }

The climate system offers a particularly compelling motivation for studying time-dependent dynamical systems \cite{Crisan2026}. Since climate variability spans a vast range of time scales \cite{saltzman_dynamical-2,vonderHeydt2021,GhilLucarini2020,LucariniChekroun2023}, it can be difficult---even operationally---to define a rigorous steady reference state; nonetheless, standard detection-and-attribution methodologies often build upon a (hypothetical) steady baseline, such as ``preindustrial'' conditions \cite{Allen1999,Hegerl2011,Hannart2014,LucariniChekroun2024}. A time-dependent baseline is arguably more physically faithful: natural forcings (astronomical, volcanic, seasonal, etc.) can be incorporated into the reference dynamics, while anthropogenic contributions can be treated as additional perturbations relative to this nonstationary background. This shift in viewpoint is also consequential for attributing observed changes to acting forcings, because the objects mediating the effect of forcings (e.g.\ Green-type response operators) inherit explicit time dependence, even though causality remains meaningful \cite{Lucarini2026}.

Across all these domains, explicit time dependence is essential for capturing transient responses, resilience, and information processing in non-stationary environments. 
Thus, nonautonomous systems provide a natural mathematical framework for studying adaptation, regulation, response, predictability, and transient statistical behaviour in many real-world complex systems.

These considerations motivate the development of quantitative tools for nonautonomous statistical states: existence and uniqueness of the reference family, and, crucially for applications and numerics, stability of the resulting equivariant/pullback measures under perturbations and approximations of the underlying time-dependent dynamics.

\subsection{Scope of this Paper}

In this paper we build on our recent results \cite{Lucarini2026,GL26} and study the approximation of equivariant families and of their response to perturbations by Ulam-like finite-dimensional schemes.

The matter is of great practical relevance for the  analysis of complex systems, because almost invariably one has to rely on finite-scale and finite-precision descriptions of their states and dynamics.
To the best of our knowledge, a general result showing that a finite-state Ulam-type reduction approximates the linear response of the original system has not previously been established in this form, even in the (most commonly investigated) autonomous setting.

The convergence and robustness of Ulam-type methods is especially relevant in applications because much of Markov state modelling \cite{Pande2010,Schuette2012,Husic2019} is based on partitioning phase space and approximating the dynamics by a finite-state Markov chain. This provides an equation-agnostic description of possibly high-dimensional systems \cite{Nor97} and is supported by widely available scientific software and linear-algebra tools \cite{MATLAB2024,PYTHON,JULIA}.
Reducing a complex system to a Markov chain makes it comparatively easy to study spectral properties and response at a coarse-grained level, both in the stationary case \cite{Lucarini2016,Santos2020,Lucarini2025} and in the time-dependent one \cite{Lucarini2026}.

In this work we study nonautonomous deterministic and random dynamical systems through their associated transfer operators.
 Our main objects are \emph{equivariant families} of measures, namely time-dependent statistical states transported by a sequential composition of transfer operators. We investigate the extent to which such families can be approximated by finite-dimensional schemes of Ulam type, and more generally by suitable finite-element or finite-state Markov approximations.

A second goal is to study the approximation of \emph{linear response}  in the nonautonomous setting. More precisely, for random dynamical systems with suitably distributed nonsingular noise, we investigate whether the derivative of the equivariant family with respect to perturbations can itself be approximated by the linear response of a finite-dimensional Markov model.
Our aim is to prove results of the following type: the linear response of the original system is approximated, in a suitable weak norm, by the linear response of a sufficiently fine Ulam-type discretization.
Our approach is based on the transfer-operator framework for nonautonomous systems developed in \cite{GL26}. In that framework, a sequential or random time-dependent system is encoded by a bi-infinite sequence of transfer operators, and the corresponding equivariant family is characterized as a fixed point of a \emph{global} transfer operator acting on a Banach space of sequences of measures. This point of view allows one to reformulate the linear response problem as a resolvent equation on sequence space, leading to explicit formulas and to a natural approximation strategy. See in particular \cite[Theorem~11]{GL26} for the abstract linear response result and \cite[Remark~12]{GL26} for the corresponding Neumann-series interpretation.

A central feature of the theory is that it is largely axiomatic. The main approximation results are formulated under a set of assumptions on the underlying weak and strong spaces of measures, on the regularizing properties of the transfer operators, and on the consistency of the finite-dimensional projections. This level of abstraction makes it possible to treat deterministic and random systems within a common framework and to separate the functional-analytic mechanism from the verification of the assumptions in specific examples.
After developing the abstract theory, we show how it applies to concrete classes of systems. In particular, we consider nonautonomous deterministic systems and random systems with nonsingular noise for which one can establish both exponential memory loss and suitable smoothing properties. For such systems, the abstract assumptions can be verified, and one obtains the approximation of both the equivariant family and its linear response by finite-dimensional Markov models of Ulam type.
Among these assumptions, a key role is played by \emph{memory loss}, namely the fast forgetting of initial conditions along the nonautonomous evolution.
 In the present context, memory loss plays a role analogous to that of the spectral gap in the autonomous theory: it provides the contraction mechanism that underlies existence, stability, and linear response of equivariant families. 
 
 The remaining assumptions required for the approximation scheme, concerning in particular the regularizing action of the transfer operators and the compatibility of the projections used in the finite-dimensional approximation, are close in spirit to the ones appearing in the autonomous setting.
The study of memory loss for sequential systems has substantial literature. The viewpoint was prominently developed by ~\cite{OSY2009}, who proved exponential memory loss for important classes of time-dependent expanding systems. Related results for higher-dimensional piecewise expanding maps were obtained by ~\cite{GOT2013}, while ~\cite{Cui2021} established exponential memory loss and related statistical properties for sequential Lasota--Yorke-type interval maps.
A complementary line of work is based on transfer-operator and functional-analytic methods. Conze and Raugi~\cite{ConzeRaugi2007} developed a systematic operator approach to sequential expanding dynamics and derived several limit theorems. Building on this framework, \cite{AiminoRousseau2016} proved concentration inequalities for sequential interval maps under suitable memory-loss assumptions, ~\cite{HNTV2017} established strong stochastic approximations, including almost sure invariance principles, in non-stationary dynamical settings.
These works use different techniques, ranging from coupling and distortion estimates to spectral and transfer-operator methods, but they share a common principle: under suitable uniform assumptions, sequential systems display a strong form of statistical stability in time. This is also the mechanism that will be exploited here.

The approximation of dynamical systems transfer operators by finite-dimensional Markov models has a long history, beginning with Ulam's original idea \cite{Ulam1960} and Li's classical proof of convergence for expanding interval maps~\cite{Li1976}. In the autonomous setting, Ulam-type discretizations and related projection methods have been extensively developed for the approximation of invariant densities, spectral data, and metastable structures; see for instance \cite{BoseMurray2001}, \cite{Froyland2007} and \cite{GalatoloNisoli2014}, \cite{nisolinuovo}. Related finite-dimensional approximation strategies based on smoother bases, interpolation or spectral truncation, rather than on Ulam partitions, have also been developed; see for instance ~\cite{Wormell2019,GalatoloLopezVereauMarangioNisoli2025,BandtlowSlipantschuk2020,BandtlowJustSlipantschuk2016}.

In random settings, analogous questions have also been studied: \cite{Froyland1999} proved convergence of Ulam's method for a class of random interval maps, while \cite{FGQ2014} established stability and approximation results for random invariant densities of Lasota-Yorke map cocycles, including perturbations arising from finite-rank approximations of Ulam type. 
A related computer-assisted application of the finite-rank operator approximation in random dynamics was developed in \cite{Galatolo_2020}, where a certified \(L^1\)-approximation of the stationary measure, obtained through a Ulam-type discretization, is used to rigorously prove the existence of noise-induced order.

The present work fits into this line of research, but focuses on the genuinely nonautonomous viewpoint of equivariant families and on the approximation not only of the statistical state itself, but also of its linear response under perturbation.
In the autonomous setting, the approximation of linear response has also been studied from several complementary viewpoints. \cite{PollicottVytnova2016} derived explicit convergent series formulas for linear response and higher derivatives in expanding and hyperbolic settings. Closer in spirit to the present work, ~\cite{BahsounGalatoloNisoliNiu2018} developed a rigorous computational framework for linear response based on transfer operators and finite-rank approximation schemes, obtaining validated numerical approximations in concrete examples. 
A different direction is represented by Monte-Carlo and trajectory-based methods, such as the fast linear response and related algorithms ~\cite{Ni2020,Ni2025}, which aim at computing response information from orbit sampling rather than from finite-dimensional Markov reductions.

While these works contain important ideas related to operator approximation, numerical response, and stochastic perturbations, they do not provide a simple abstract statement of the form: "the linear response of the finite-state Markov reduction converges to the linear response of the original system".
One of the goals of the present work is to provide such a result in an abstract form that is directly applicable once the required assumptions are verified.

{We remark that our results are not restricted to classical Ulam partitions with piecewise constant basis functions, but also cover more general finite-element approximation spaces. Hence, our work has direct relevance for the literature dedicated to the study of a large variety of deterministic and stochastic dynamical systems via \textit{Koopmanism} \cite{Budisic2012}, \textit{i.e.} by constructing  finite dimensional approximations of  the Koopman/Kolmogorov operator \cite{KutzBrunton2016,Klus2020,Colbrook2023_MultiverseDMD,Brunton2022,Klus2024}. It is hard to overemphasize the importance of this research line in bridging the gap between theory-informed and data-driven analysis of a large variety of systems \cite{Mauroy2020}. Specifically, our findings on the convergence of the approximation of the response operator provide support to recently published formal calculations showing that Green's functions can be expressed as a sum of terms each associated with individual eigenmodes of the Koopman/Kolmogorov operator of the unperturbed system \cite{Santos2022,LucariniChekroun2023,Lucarini2025,zagli_SIAM:2026,lucarini2025generalframeworklinkingfree}.}

Finally, we complement the mathematical analysis with numerical simulations on simple but instructive models, illustrating the approximation of equivariant measures and linear response by the Ulam method, in the non-autonomous context, as discussed in the paper.

\subsection{Structure of the Paper and Main Results}
We describe the structure of the paper and  the construction made, highlighting the main results.

In Section \ref{S2}  we introduce the basic transfer-operator framework for nonautonomous systems, together with the notions of equivariant family, global map on sequence spaces, and memory loss.

In Section \ref{S3} we show how a Markov model of the original system extracted by Ulam-like methods can approximate the equivariant measure of the original system. We also present quantitative results on how the approximation improves as we consider finer partitions of the phase space. 

We begin by showing a general quantitative stability result for the equivariant families. In Theorem \ref{thm:dlogd} we show that under the main assumption that the system has exponential memory loss, the 
system is, in a suitable sense, statistically stable under small perturbations, and the response to a perturbation of size $\delta$ is $O(\delta log\delta)$. This result is the nonautonomous analogue of a result already known for autonomous systems under similar assumptions (see \cite{JEP_2018__5__377_0}).

In Theorem \ref{thm:lipschitz} we also prove  a more stringent Lipschitz rate that applies in the case of regularizing transfer operators.
This case is common for system whose dynamics include diffusive terms, as in the case of stochastic differential equations, or random system with (state dependent) additive noise.
This situation is common in many applications across complex systems science, 
where models are often expressed as random dynamical systems.

In Section \ref{subsec:abstract_projection_estimates} we apply these general results to the Ulam-like approximations.
In Proposition \ref{prop:projection_delta_log_delta} we  show that the
rate of approximation is of the order of $\delta log \delta$.
This is the same rate expected in the autonomous case (see \cite{BoseMurray2001}).

In Section \ref{subsec:ulam_loss_of_memory} we show that a sufficiently accurate Ulam discretization must inherit exponential memory loss.

In Section \ref{S4} we suppose that the transfer operators are regularizing (see Assumption {\bf (H3)}); we consider the linear response of the system to small perturbations and we show how this can be approximated by the linear response of a Markov model given by a suitable Ulam-like approximation (Proposition \ref{prop:linear_response_approx}).

In Sections \ref{subsec:uniformly_positive_smoothing} 
and \ref{subsec:random_systems_satisfying_assumptions}
we present a rather general family of  sequential random systems with regularizing noise for which the abstract framework developed in the paper applies. 

In Section \ref{numericalsim} we illustrate the theoretical findings on concrete examples of spatial and temporal time discretizations of diffusive processes undergoing a time dependent forcing. 

In Section \ref{subsec:ulam_bv_l1} we collect well known estimates, showing that the classical Ulam approximation scheme satisfies the properties for a Markov model induced by a finite element projection we assume in the paper.

In Section \ref{sec:conclu} we present our conclusions and perspective for future work.

\section{Sequential non autonomous dynamical systems and equivariant measures}\label{S2}

In this section, we recall the basic notions about nonstationary sequences of transfer operators, equivariant measures, and memory loss (see \cite{GL26} Sections 2-3 for more details).

Let $(X,\mathcal A)$ be a measurable space. We consider two normed linear spaces of finite signed measures
\[
(B_s,\|\cdot\|_s)\subset (B_w,\|\cdot\|_w),
\qquad\text{with}\qquad
\|\mu\|_w\le \|\mu\|_s \ \ \forall \mu\in B_s,
\]
and we assume the inclusion $B_s\hookrightarrow B_w$ is continuous.

The space \(B_s\) plays the role of a strong space, carrying enough regularity to control approximation and perturbation arguments, while \(B_w\) is the weak space in which stability estimates are formulated.

Denote by $\mu(X)$ the total mass of $\mu$ and define the zero-mass subspaces
\[
V_s:=\{\mu\in B_s:\mu(X)=0\},
\qquad
V_w:=\{\mu\in B_w:\mu(X)=0\}.
\]
We also set
\[
\mathcal P_s:=\{\mu\in B_s:\ \mu\ge 0,\ \mu(X)=1\}.
\]

\paragraph{Nonautonomous Markov operators}
We will now study the dynamics of a certain random or deterministic system through the properties of its transfer operators.
A (two-sided) nonautonomous system of transfer operators is a bi-infinite family
\[
(L_n)_{n\in\mathbb Z},\qquad L_n:B_w\to B_w,\ \ L_n:B_s\to B_s,
\]
such that each $L_n$ is \emph{Markov} (preserving mass and preserving positivity). We will use the standard forward compositions
\[
L^{(j,j+k-1)}:=L_{j+k-1}\circ \cdots \circ L_{j+1}\circ L_j,
\qquad k\ge 1,
\]
and $L^{(j,j-1)}:=\mathrm{Id}$.

\paragraph{Sequence space and the global map}
Define the sequence spaces
\[
\mathcal B_s:=\ell^\infty(\mathbb Z;B_s),\qquad
\|\boldsymbol\mu\|_{\mathcal B_s}:=\sup_{n\in\mathbb Z}\|\mu_n\|_s,
\]
and similarly $\mathcal B_w:=\ell^\infty(\mathbb Z;B_w)$.

Given $(L_n)_{n\in\mathbb Z}$, define the \emph{global map} (an autonomous map on sequences)
\[
\mathbb F_L:\mathcal B_s\to\mathcal B_s,
\qquad
(\mathbb F_L\boldsymbol\mu)_n:=L_{n-1}\mu_{n-1},
\qquad n\in\mathbb Z.
\]

\begin{definition}[Equivariant measure / equivariant family]
A sequence $\boldsymbol\mu=(\mu_n)_{n\in\mathbb Z}\in \mathcal P_s^{\mathbb Z}$ such that $\sup_n\|\mu_n\|_s<\infty$ is said to be an \emph{equivariant measure} (or \emph{equivariant family}) for $(L_n)$ if
\begin{equation}\label{eq:equivariance}
\mu_{n+1}=L_n\mu_n\qquad\forall n\in\mathbb Z.
\end{equation}
\end{definition}

Directly from the definition of $\mathbb F_L$ we have the following

\begin{lemma}[Fixed points vs.\ equivariant families]\label{lem:fixedpoint_equivariant}
For $\boldsymbol\mu\in\mathcal B_s$ we have
\[
\mathbb F_L(\boldsymbol\mu)=\boldsymbol\mu
\quad\Longleftrightarrow\quad
\boldsymbol\mu \text{ satisfies \eqref{eq:equivariance}}.
\]
Moreover, $\mathbb F_L(\mathcal P_s^{\mathbb Z})\subset \mathcal P_s^{\mathbb Z}$.
\end{lemma}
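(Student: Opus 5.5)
The plan is to unwind the definition of $\mathbb F_L$ coordinatewise; no earlier result is needed beyond the definitions. First I would note that $\mathbb F_L$ does map $\mathcal B_s$ to $\mathcal B_s$, since $\sup_n\|L_{n-1}\mu_{n-1}\|_s\le \sup_n\|L_{n-1}\|_{B_s\to B_s}\,\|\boldsymbol\mu\|_{\mathcal B_s}$. This uses that the operators $L_n$ are uniformly bounded on $B_s$, which is implicit in the statement that $\mathbb F_L:\mathcal B_s\to\mathcal B_s$. In any case, the equivalence below needs only the formula for $(\mathbb F_L\boldsymbol\mu)_n$.

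For the equivalence, take $\boldsymbol\mu\in\mathcal B_s$. The identity $\mathbb F_L\boldsymbol\mu=\boldsymbol\mu$ holds in $\ell^\infty(\mathbb Z;B_s)$ if and only if it holds in every coordinate. That is, $L_{n-1}\mu_{n-1}=\mu_n$ for all $n\in\mathbb Z$. Reindexing with $m=n-1$, which is a bijection of $\mathbb Z$, this becomes $\mu_{m+1}=L_m\mu_m$ for all $m$, which is exactly \eqref{eq:equivariance}. Each step is reversible, so both implications follow at once. Boundedness $\sup_n\|\mu_n\|_s<\infty$ is automatic from $\boldsymbol\mu\in\mathcal B_s$. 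Hence the fixed points of $\mathbb F_L$ that lie in $\mathcal P_s^{\mathbb Z}$ are precisely the equivariant families.

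For the invariance of $\mathcal P_s^{\mathbb Z}$, let $\mu_n\in\mathcal P_s$ for all $n$. Each $L_{n-1}$ maps $B_s$ to $B_s$, is positive, and preserves total mass. Therefore $L_{n-1}\mu_{n-1}\ge 0$ and $(L_{n-1}\mu_{n-1})(X)=\mu_{n-1}(X)=1$, so $(\mathbb F_L\boldsymbol\mu)_n\in\mathcal P_s$ for every $n$.

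The only step needing care is the meaning of $\mathcal P_s^{\mathbb Z}$: if it is read inside $\mathcal B_s$, one must also check that $\sup_n\|(\mathbb F_L\boldsymbol\mu)_n\|_s<\infty$. That bound comes from the uniform boundedness noted in the first step. Otherwise the argument is purely definitional.
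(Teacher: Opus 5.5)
Your proof is correct and takes the same approach as the paper, which gives no separate proof and only says the lemma follows directly from the definition of $\mathbb F_L$: comparing coordinates and reindexing gives the equivalence, and positivity and mass preservation of each $L_n$ give invariance of $\mathcal P_s^{\mathbb Z}$. You are also right that having $\mathbb F_L$ map $\mathcal B_s$ into itself tacitly requires $\sup_n\|L_n\|_{B_s\to B_s}<\infty$, an assumption the paper leaves implicit.
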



\paragraph{Memory loss}\label{S2.3}

The following definition captures the idea that the nonautonomous evolution ``forgets'' initial conditions.

\begin{definition}[Strong Memory loss]\label{def:lom}
We say that $(L_n)_{n\in\mathbb Z}$ has strong exponential \emph{memory loss  on $V_s$} if there exists  constants $C\ge 1$ and $\lambda>0$ such that for all $j\in\mathbb Z$, all $k\ge 1$, and all $v\in V_s$,
\begin{equation}\label{eq:lom_general}
\bigl\|L^{(j,j+k-1)} v\bigr\|_s \;\le\; C e^{-\lambda k} \,\|v\|_s .
\end{equation}
\end{definition}

\begin{remark}
Since differences of probability measures have zero mass, \eqref{eq:lom_general} implies
\[
\|L^{(j,j+k-1)}\nu - L^{(j,j+k-1)}\nu'\|_s \le C e^{-\lambda k}\|\nu-\nu'\|_s
\quad\forall \nu,\nu'\in\mathcal P_s.
\]
Thus  memory loss is a uniform contraction property on the ``zero-mass directions''.
\end{remark}
From memory loss, it is easy (see  \cite{GL26} Section 3 for more details) to obtain existence and uniqueness results of the equivariant family for the system.
In the following sections, this contraction mechanism will be the basic ingredient both for quantitative stability under perturbations and for the approximation of equivariant families by finite-dimensional reductions.

\section{Quantitative stability under perturbations}\label{S3}

We now consider two  nearby non autonomous systems $(L_n)$ and $(\widetilde L_n)$ having exponential memory loss and estimate the distance between the  equivariant families of the two systems. 
By this we quantitatively understand the stability of the equivariant measure as the system is perturbed.

\subsection{Standing perturbation hypotheses}
For the two systems we assume:
\begin{assumption}[Weak contraction of the weak norm]\label{ass:weak_nonexp}
For all $n\in\mathbb Z$ and all $\mu\in B_w$,
\[
\|L_n\mu\|_w\le \|\mu\|_w,
\qquad
\|\widetilde L_n\mu\|_w\le \|\mu\|_w.
\]
\end{assumption}

\begin{assumption}[Exponential memory loss]\label{ass:elom}
 $(L_n)$ satisfy strong exponential memory loss (Definition \ref{def:lom}).
\end{assumption}

\begin{assumption}[Mixed norm closeness]\label{ass:mixed_delta}
Define
\[
\delta:=\sup_{n\in\mathbb Z}\|L_n-\widetilde L_n\|_{B_s\to B_w}<\infty.
\]
\end{assumption}

\begin{assumption}[Uniform strong bounds of the equivariant families]\label{ass:Ms}
Let $\boldsymbol\mu,\widetilde{\boldsymbol\mu}\in\mathcal P_s^{\mathbb Z}$ be bounded fixed points of
$\mathbb F_L$ and $\mathbb F_{\widetilde L}$, respectively, and assume
\[
M_s:=\sup_{n\in\mathbb Z}\|\mu_n\|_s<\infty,
\qquad
\widetilde M_s:=\sup_{n\in\mathbb Z}\|\widetilde\mu_n\|_s<\infty.
\]
(These bounds are typically obtained from a uniform Lasota--Yorke inequality.)
\end{assumption}

\subsection{Exponential memory loss and quantitative statistical stability }

The next theorem shows that if two nonautonomous systems are close in the mixed strong-to-weak norm, and the reference system has exponential memory loss, then their equivariant families are close in the weak norm. In particular, the equivariant family depends continuously on the operator sequence, and one obtains an explicit modulus of continuity of order \(\delta\log\delta\).

\begin{theorem}[$\delta\log\delta$ stability in the weak norm]\label{thm:dlogd}
Assume Assumptions~\ref{ass:weak_nonexp}--\ref{ass:Ms}.
Then there exists a constant $K>0$ (depending only on $C,\lambda,M_s,\widetilde M_s$)
such that, for all $0<\delta<1$,
\begin{equation}\label{eq:dlogd_bound}
\sup_{n\in\mathbb Z}\|\mu_n-\widetilde\mu_n\|_w
\ \le\ 
K\,\delta\bigl(1+\log\tfrac{1}{\delta}\bigr).
\end{equation}
In particular, the fixed equivariant family depends continuously on the operator sequence in the mixed norm.
\end{theorem}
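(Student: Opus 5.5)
The plan is to compare the two equivariant families by pulling both back a common number $N\ge 1$ of steps into the past, and then to optimize over $N$. Fix $n\in\mathbb Z$. By equivariance (Lemma~\ref{lem:fixedpoint_equivariant}) we have $\mu_n=L^{(n-N,n-1)}\mu_{n-N}$ and $\widetilde\mu_n=\widetilde L^{(n-N,n-1)}\widetilde\mu_{n-N}$. We then split
\[
\mu_n-\widetilde\mu_n
= L^{(n-N,n-1)}\bigl(\mu_{n-N}-\widetilde\mu_{n-N}\bigr)
+\bigl(L^{(n-N,n-1)}-\widetilde L^{(n-N,n-1)}\bigr)\widetilde\mu_{n-N}.
\]
The first term carries the ``forgetting of the past'', and the second carries the accumulated perturbation error.

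For the first term, note that $\mu_{n-N}-\widetilde\mu_{n-N}\in V_s$, because both are probability measures in $B_s$. Strong exponential memory loss (Assumption~\ref{ass:elom}) then gives a strong-norm bound of $Ce^{-\lambda N}(M_s+\widetilde M_s)$. Since $\|\cdot\|_w\le\|\cdot\|_s$, the same bound holds in the weak norm.

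For the second term, we use the telescoping identity
\[
L^{(n-N,n-1)}-\widetilde L^{(n-N,n-1)}=\sum_{j=n-N}^{n-1} L^{(j+1,n-1)}\,(L_j-\widetilde L_j)\,\widetilde L^{(n-N,j-1)},
\]
with the convention that an empty composition is the identity. Applied to $\widetilde\mu_{n-N}$, equivariance of $\widetilde{\boldsymbol\mu}$ gives $\widetilde L^{(n-N,j-1)}\widetilde\mu_{n-N}=\widetilde\mu_j$, whose strong norm is at most $\widetilde M_s$ by Assumption~\ref{ass:Ms}. By Assumption~\ref{ass:mixed_delta}, $\|(L_j-\widetilde L_j)\widetilde\mu_j\|_w\le\delta\widetilde M_s$. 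The weak contraction of Assumption~\ref{ass:weak_nonexp} shows that applying $L^{(j+1,n-1)}$ does not increase the weak norm. Summing the $N$ terms bounds the second term by $N\delta\widetilde M_s$.

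Combining the two estimates, uniformly in $n$,
\[
\|\mu_n-\widetilde\mu_n\|_w\le Ce^{-\lambda N}(M_s+\widetilde M_s)+N\delta\widetilde M_s .
\]
We now choose $N=\lceil \lambda^{-1}\log(1/\delta)\rceil$, which satisfies $N\ge 1$ and $N\le 1+\lambda^{-1}\log(1/\delta)$ for $0<\delta<1$. With this choice $e^{-\lambda N}\le\delta$, and the bound becomes
\[
\delta\bigl[C(M_s+\widetilde M_s)+\widetilde M_s(1+\lambda^{-1}\log(1/\delta))\bigr].
\]
This gives \eqref{eq:dlogd_bound} with $K=C(M_s+\widetilde M_s)+\widetilde M_s\max(1,\lambda^{-1})$.

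There is no genuine obstacle in this argument. The only point needing care is to apply the mixed norm $\|L_j-\widetilde L_j\|_{B_s\to B_w}$ only to strong-norm-bounded objects, namely the $\widetilde\mu_j$. This is why the telescoping is arranged with the perturbed operators acting first (on the right), and the unperturbed operators, which are controlled only in $B_w$ by weak contraction, acting last.
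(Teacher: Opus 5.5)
Your proof is correct and is essentially the paper's argument. Your telescoping of $L^{(n-N,n-1)}-\widetilde L^{(n-N,n-1)}$ applied to $\widetilde\mu_{n-N}$ reproduces exactly the paper's Duhamel expansion from iterating $\Delta_n=L_{n-1}\Delta_{n-1}+(L_{n-1}-\widetilde L_{n-1})\widetilde\mu_{n-1}$ (after reindexing), with the same two bounds, the same choice of $N$, and your explicit $K=C(M_s+\widetilde M_s)+\widetilde M_s\max(1,\lambda^{-1})$ as a valid instance of the paper's constant.
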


\begin{proof}
Fix $n\in\mathbb Z$ and set $\Delta_n:=\mu_n-\widetilde\mu_n$. Since both are probabilities, $\Delta_n(X)=0$.
Using equivariance $\mu_n=L_{n-1}\mu_{n-1}$ and $\widetilde\mu_n=\widetilde L_{n-1}\widetilde\mu_{n-1}$ we get
\begin{equation}\label{eq:recurrence}
\Delta_n
= L_{n-1}\Delta_{n-1} + E_{n-1},
\qquad
E_{n-1}:=(L_{n-1}-\widetilde L_{n-1})\widetilde\mu_{n-1}.
\end{equation}
By Assumption~\ref{ass:mixed_delta} and the bound $\|\widetilde\mu_{n-1}\|_s\le \widetilde M_s$,
\begin{equation}\label{eq:Ebound}
\|E_{n-1}\|_w
\le \|L_{n-1}-\widetilde L_{n-1}\|_{B_s\to B_w}\,\|\widetilde\mu_{n-1}\|_s
\le \delta\,\widetilde M_s.
\end{equation}

Iterating \eqref{eq:recurrence} backwards $N$ steps yields the Duhamel expansion
\begin{equation}\label{eq:duhamelN}
\Delta_n
=
L^{(n-N,n-1)}\Delta_{n-N}
+\sum_{j=0}^{N-1} L^{(n-j,n-1)} E_{n-j-1}.
\end{equation}

We now estimate the first summand of \eqref{eq:duhamelN}.
Since $\Delta_{n-N}\in V_s$ and $\|\Delta_{n-N}\|_s\le \|\mu_{n-N}\|_s+\|\widetilde\mu_{n-N}\|_s\le M_s+\widetilde M_s$,
Assumption~\ref{ass:elom} gives
\[
\|L^{(n-N,n-1)}\Delta_{n-N}\|_w
\le \|L^{(n-N,n-1)}\Delta_{n-N}\|_s
\le C e^{-\lambda N}(M_s+\widetilde M_s).
\]

We now estimate the second summand of \eqref{eq:duhamelN}.
By weak nonexpansion (Assumption~\ref{ass:weak_nonexp}), each composition is nonexpanding on $B_w$:
\[
\|L^{(n-j,n-1)}\eta\|_w\le \|\eta\|_w\quad\forall \eta\in B_w.
\]
Hence, using \eqref{eq:Ebound},
\[
\left\|\sum_{j=0}^{N-1} L^{(n-j,n-1)} E_{n-j-1}\right\|_w
\le \sum_{j=0}^{N-1}\|E_{n-j-1}\|_w
\le N\,\delta\,\widetilde M_s.
\]

Combining these bounds with \eqref{eq:duhamelN} we obtain, uniformly in $n$,
\begin{equation}\label{eq:two_terms}
\|\Delta_n\|_w \le C e^{-\lambda N}(M_s+\widetilde M_s) + N\,\delta\,\widetilde M_s.
\end{equation}
Choose
\[
N:=\left\lceil \frac{1}{\lambda}\log\frac{1}{\delta}\right\rceil
\qquad (0<\delta<1).
\]
Then $e^{-\lambda N}\le \delta$ and $N\le 1+\frac{1}{\lambda}\log\frac{1}{\delta}$. Substituting into \eqref{eq:two_terms}
yields \eqref{eq:dlogd_bound} with a suitable constant $K$.
\end{proof}

\subsection{Lipschitz stability under regularisation $B_w\to B_s$}
A more stringent Lipschitz stability estimate can be provided under the additional assumption that the transfer operators have regularizing behavior, from the weak space to the strong one.
\begin{assumption}[One-step regularisation]\label{ass:regularising}
There exists $R>0$ such that for all $n\in\mathbb Z$ and all $\eta\in B_w$,
\[
\|L_n\eta\|_s\le R\|\eta\|_w,
\qquad
\|\widetilde L_n\eta\|_s\le R\|\eta\|_w.
\]
\end{assumption}

\begin{remark}
Assumption~\ref{ass:regularising} is a strong form of ``smoothing'', corresponding to a Lasota--Yorke inequality with $\lambda_1=0$.
It is natural in many random/noisy examples where the transfer operator includes an explicit smoothing kernel.
\end{remark}

\begin{theorem}[Lipschitz stability in the weak norm under regularisation]\label{thm:lipschitz}
Under Assumptions~\ref{ass:weak_nonexp}, \ref{ass:elom}, \ref{ass:mixed_delta}, \ref{ass:Ms}, and \ref{ass:regularising}.
It follows that
\begin{equation}\label{eq:lipschitz_bound}
\sup_{n\in\mathbb Z}\|\mu_n-\widetilde\mu_n\|_w
\ \le\
\widetilde M_s\left(1+\frac{C R}{1-e^{-\lambda}}\right)\delta.
\end{equation}
In particular, the dependence on the mixed perturbation size $\delta$ is \emph{Lipschitz}.
\end{theorem}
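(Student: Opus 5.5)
We follow the Duhamel argument of Theorem~\ref{thm:dlogd}, but use the regularisation assumption to make every term in the sum decay exponentially. Set $\Delta_n:=\mu_n-\widetilde\mu_n\in V_s$ and $E_{n-1}:=(L_{n-1}-\widetilde L_{n-1})\widetilde\mu_{n-1}$, so that $\Delta_n=L_{n-1}\Delta_{n-1}+E_{n-1}$ and $\|E_{n-1}\|_w\le\delta\widetilde M_s$, exactly as in \eqref{eq:recurrence}--\eqref{eq:Ebound}. Iterating $N$ steps gives \eqref{eq:duhamelN}. The first term is bounded by $Ce^{-\lambda N}(M_s+\widetilde M_s)$, which tends to $0$ as $N\to\infty$. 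We therefore let $N\to\infty$ and are left with the full series
\[
\Delta_n=\sum_{j\ge 0}L^{(n-j,n-1)}E_{n-j-1}.
\]

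We separate the $j=0$ term, which is just $E_{n-1}$, with $\|E_{n-1}\|_w\le\delta\widetilde M_s$; this gives the "$1$" in the constant. For $j\ge1$ we factor
\[
L^{(n-j,n-1)}E_{n-j-1}=L^{(n-j+1,n-1)}\bigl(L_{n-j}E_{n-j-1}\bigr).
\]
Note that $E_{n-j-1}$ has zero mass, since $L$ and $\widetilde L$ are both Markov and preserve mass. Hence $L_{n-j}E_{n-j-1}\in V_s$. Assumption~\ref{ass:regularising} gives $\|L_{n-j}E_{n-j-1}\|_s\le R\,\delta\widetilde M_s$. Memory loss over the remaining $j-1$ steps then gives a strong-norm bound $CRe^{-\lambda(j-1)}\delta\widetilde M_s$, and the weak norm is dominated by the strong one. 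When $j=1$ the composition is the identity, and the bound $R\delta\widetilde M_s\le CR\delta\widetilde M_s$ still holds because $C\ge1$.

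Summing the geometric series over $j\ge1$ gives
\[
\sum_{j\ge1} CRe^{-\lambda(j-1)}\delta\widetilde M_s=\frac{CR}{1-e^{-\lambda}}\,\delta\widetilde M_s,
\]
and adding the $j=0$ term yields \eqref{eq:lipschitz_bound}, uniformly in $n$.

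The only delicate points are the following. First, $E$ must lie in $B_w$ with zero mass, so that after one regularising step it lands in $V_s$, where memory loss applies. This needs $\mu\mapsto\mu(X)$ to be compatible with the operators; it holds because both families are Markov. Second, the passage $N\to\infty$ must be justified. This is immediate because the remainder decays in the strong norm and the partial sums converge absolutely in the weak norm. Alternatively, one can keep $N$ finite, bound the partial sum by the full geometric series, and then let $N\to\infty$ in the resulting inequality.
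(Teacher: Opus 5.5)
Your proof is correct and follows the paper's argument essentially step by step. Like the paper, you let $N\to\infty$ in the Duhamel expansion \eqref{eq:duhamelN}, isolate the $j=0$ term, factor off one regularising step $L_{n-j}$ for $j\ge1$, apply memory loss over the remaining $j-1$ steps, and sum the geometric series. Your exponent $e^{-\lambda(j-1)}$, with the case $j=1$ covered by $C\ge1$, is the correct one and is what the paper's final summation uses; the intermediate exponent $e^{-\lambda(j-2)}$ in the paper's proof is a slip.
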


\begin{proof}
Fix $n\in\mathbb Z$ and use \eqref{eq:duhamelN}. As in the proof of Theorem~\ref{thm:dlogd},
the first term satisfies
\[
\|L^{(n-N,n-1)}\Delta_{n-N}\|_w
\le \|L^{(n-N,n-1)}\Delta_{n-N}\|_s
\le C e^{-\lambda N}(M_s+\widetilde M_s)\xrightarrow[N\to\infty]{}0.
\]
Thus letting $N\to\infty$ in \eqref{eq:duhamelN} yields the convergent series representation in $B_w$:
\begin{equation}\label{eq:series_rep}
\Delta_n=\sum_{j=0}^{\infty} L^{(n-j,n-1)}E_{n-j-1}.
\end{equation}

The $j=0$ term satisfies $\|E_{n-1}\|_w\le \delta\,\widetilde M_s$ by \eqref{eq:Ebound}.
For $j\ge 1$, write
\[
L^{(n-j,n-1)}E_{n-j-1}
=
L^{(n-j+1,n-1)}\bigl(L_{n-j}E_{n-j-1}\bigr).
\]
Since $E_{n-j-1}(X)=0$ and $L_{n-j}$ preserves mass, $L_{n-j}E_{n-j-1}\in V_s$.
Using exponential memory loss and then regularisation,
\[
\|L^{(n-j,n-1)}E_{n-j-1}\|_w
\le \|L^{(n-j,n-1)}E_{n-j-1}\|_s
\] and \[ \|L^{(n-j,n-1)}E_{n-j-1}||_s
\le C e^{-\lambda(j-1)}\|L_{n-j}E_{n-j-1}\|_s
\le C R e^{-\lambda(j-2)}\|E_{n-j-1}\|_w.
\]
Finally, \eqref{eq:Ebound} gives $\|E_{n-j-1}\|_w\le \delta\,\widetilde M_s$, hence for $j\ge 1$,
\[
\|L^{(n-j,n-1)}E_{n-j-1}\|_w
\le C R e^{-\lambda(j-2)}\,\delta\,\widetilde M_s.
\]
Summing \eqref{eq:series_rep} and using the geometric series
\[
\|\Delta_n\|_w
\le \delta\,\widetilde M_s + \sum_{j=1}^\infty C R e^{-\lambda(j-1)}\,\delta\,\widetilde M_s
=
\widetilde M_s\left(1+\frac{C R}{1-e^{-\lambda}}\right)\delta.
\]
The bound is uniform in $n$, proving \eqref{eq:lipschitz_bound}.
\end{proof}


\section{Finite element reduction and approximation rate for the equivariant family}
\label{subsec:abstract_projection_estimates}

Here we introduce finite-dimensional approximation schemes in an abstract way. We view them as small perturbations of the original nonautonomous system. As in the classical Ulam framework, the basic construction is obtained by composing the transfer operators with a finite-rank projection. This point of view allows one to treat, within a common setup, the original Ulam discretization based on projections  to piecewise constant functions, and more general finite-element schemes involving smoother projections. In this sense, the framework considered below may be regarded as a natural extension of the classical Ulam approximation method to a broader class of projection-based discretizations.
 We start from basic axiomatic estimates for finite element discretizations, we then apply the resulting framework to the finite elements approximation of equivariant families, establishing quantitative estimates for the approximation rate.

The definition and basic properties regarding the classical Ulam approximation scheme are recalled in Section \ref{subsec:ulam_bv_l1}.
 
Below we list a set of rather usual assumptions  for the operators and the finite rank projections we consider to define a Ulam-like finite element reduction. 
Let \(L_n:B_w\to B_w\) and \(L_n:B_s\to B_s\) be a family of linear operators, acting on a strong and weak space as in the previous sections. Let \((\pi_m)_{m\ge1}\) be a family of projections on \(B_w\). We assume:

\medskip

\noindent
\textbf{(P1)} there exist constants \(C_w,C_s>0\) such that
\[
\|\pi_m f\|_w\le C_w\|f\|_w
\qquad \forall f\in B_w,
\]
and
\[
\|\pi_m f\|_s\le C_s\|f\|_s
\qquad \forall f\in B_s,
\]
for every \(m\ge1\);

\medskip

\noindent
\textbf{(P2)} there exists a sequence \(\delta_m\to0\) such that
\[
\|\pi_m f-f\|_w\le \delta_m\|f\|_s
\qquad \forall f\in B_s;
\]

\medskip

\noindent
\textbf{(LY)} there exist constants \(\lambda,B\ge0\) such that for each $n$
\[
\|L_nf\|_s\le \lambda \|f\|_s+B\|f\|_w
\qquad \forall f\in B_s;
\]

\medskip

\noindent
\textbf{(W)} there exists \(C_L>0\) such that for each $n$
\[
\|L_nf\|_w\le C_L\|f\|_w
\qquad \forall f\in B_w.
\]

For each \(m,n\), define the finite-rank approximation as
\[
M_n^{(m)}:=\pi_m L_n\pi_m,
\qquad n\in\mathbb Z.
\]

\begin{proposition}[Abstract Lasota--Yorke and mixed-distance estimates]
\label{cor:abstract_sequential_projection}
Under assumptions \textbf{(P1)}--\textbf{(W)},  for each $n,m$ the operators \(M_n^{(m)}\) satisfy:

\begin{enumerate}
\item[\textup{(i)}] a uniform Lasota--Yorke inequality on \(B_s\),
\[
\|M_n^{(m)} f\|_s
\le
C_s\lambda C_s\,\|f\|_s + C_sBC_w\,\|f\|_w
\qquad \forall f\in B_s;
\]

\item[\textup{(ii)}] a mixed strong-to-weak approximation estimate,
\[
\|M_n^{(m)} f-L_nf\|_w
\le
\delta_m\Bigl[(C_wC_L+\lambda)\|f\|_s+B\|f\|_w\Bigr]
\qquad \forall f\in B_s.
\]
In particular,
\[
\|M_n^{(m)}-L_n\|_{B_s\to B_w}
\longrightarrow 0
\qquad\text{as }m\to\infty.
\]
\end{enumerate}
\end{proposition}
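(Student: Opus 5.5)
Both parts follow by direct triangle-inequality arguments from (P1)--(W), using only that $M_n^{(m)}=\pi_m L_n\pi_m$ and that $\pi_m$ is a projection.

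\emph{Part (i).} For $f\in B_s$ I first note that $\pi_m f\in B_s$ by (P1). Then I apply the strong bound of (P1) to the outer projection, (LY) to $L_n$ acting on $\pi_m f$, and finally (P1) again to each of the two resulting terms:
\[
\|M_n^{(m)}f\|_s\le C_s\|L_n\pi_m f\|_s\le C_s\bigl(\lambda\|\pi_m f\|_s+B\|\pi_m f\|_w\bigr)\le C_s\lambda C_s\|f\|_s+C_sBC_w\|f\|_w .
\]
This is exactly the stated constant, and it is uniform in $n,m$ because all the constants are.

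\emph{Part (ii).} I split the difference into the error from the inner projection and the error from the outer one:
\[
M_n^{(m)}f-L_nf=\pi_m L_n(\pi_m f-f)+(\pi_m-\mathrm{Id})L_n f .
\]
For the first term I use (P1) in the weak norm, then (W), then (P2):
\[
\|\pi_m L_n(\pi_m f-f)\|_w\le C_wC_L\|\pi_m f-f\|_w\le C_wC_L\delta_m\|f\|_s .
\]
For the second term I apply (P2) to $g=L_nf\in B_s$ and then (LY):
\[
\|(\pi_m-\mathrm{Id})L_nf\|_w\le\delta_m\|L_nf\|_s\le\delta_m(\lambda\|f\|_s+B\|f\|_w).
\]
Adding the two bounds gives $\delta_m[(C_wC_L+\lambda)\|f\|_s+B\|f\|_w]$, as claimed.

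For the operator-norm statement I use $\|f\|_w\le\|f\|_s$ to get
\[
\|M_n^{(m)}-L_n\|_{B_s\to B_w}\le\delta_m(C_wC_L+\lambda+B),
\]
uniformly in $n$, and this tends to $0$ since $\delta_m\to0$.

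There is no real obstacle here. The only point needing care is choosing the right splitting in (ii), so that (P2) is always applied to an element of $B_s$ (namely $f$ and $L_nf$). The term with the outer projection must be controlled through (LY), which is where the constants $\lambda$ and $B$ enter.
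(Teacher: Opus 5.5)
Your proof is correct and follows the paper's argument essentially verbatim. Part (i) uses the same chain (P1), then (LY), then (P1). Part (ii) uses the same splitting $M_n^{(m)}f-L_nf=\pi_mL_n(\pi_mf-f)+(\pi_m-\mathrm{Id})L_nf$, with (P1), (W), (P2) on the first term and (P2), (LY) on the second. Your explicit operator-norm bound $\delta_m(C_wC_L+\lambda+B)$, obtained from $\|f\|_w\le\|f\|_s$, simply spells out the paper's closing appeal to the continuous embedding $B_s\hookrightarrow B_w$.
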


\begin{proof}
For the Lasota--Yorke estimate, let \(f\in B_s\). Then
\[
\|M_n^{(m)} f\|_s
=
\|\pi_m L_n\pi_m f\|_s
\le
C_s\|L_n\pi_m f\|_s.
\]
Applying \textbf{(LY)} to \(\pi_m f\), we get
\[
\|L_n\pi_m f\|_s
\le
\lambda\|\pi_m f\|_s+B\|\pi_m f\|_w.
\]
Using \textbf{(P1)} again,
\[
\|M_n^{(m)} f\|_s
\le
C_s\lambda C_s\,\|f\|_s + C_sBC_w\,\|f\|_w.
\]

For the mixed estimate, write
\[
M_n^{(m)} f-L_nf
=
\pi_mL_n\pi_m f-L_nf
=
\pi_mL_n(\pi_m f-f)+(\pi_mL_nf-L_nf).
\]
Hence
\[
\|M_n^{(m)} f-L_nf\|_w
\le
\|\pi_mL_n(\pi_m f-f)\|_w+\|\pi_mL_nf-L_nf\|_w.
\]
For the first term, using \textbf{(P1)}, \textbf{(W)}, and then \textbf{(P2)},
\[
\|\pi_mL_n(\pi_m f-f)\|_w
\le
C_w\|L_n(\pi_m f-f)\|_w
\le
C_wC_L\|\pi_m f-f\|_w
\le
C_wC_L\,\delta_m\|f\|_s.
\]
For the second term, \textbf{(P2)} applied to \(L_nf\in B_s\) gives
\[
\|\pi_mL_nf-L_nf\|_w\le \delta_m\|L_nf\|_s,
\]
and therefore by \textbf{(LY)},
\[
\|\pi_mL_nf-L_nf\|_w
\le
\delta_m\bigl(\lambda\|f\|_s+B\|f\|_w\bigr).
\]
Summing the two bounds yields
\[
\|M_n^{(m)} f-L_nf\|_w
\le
\delta_m\Bigl[(C_wC_L+\lambda)\|f\|_s+B\|f\|_w\Bigr].
\]
Since \(B_s\hookrightarrow B_w\), this implies \(\|M_n^{(m)}-L_n\|_{B_s\to B_w}\to0\).
\end{proof}

We can finally lift the estimate to the global operators on sequence spaces. Recall that
\[
\mathcal B_s:=\ell^\infty(\mathbb Z;B_s),
\qquad
\mathcal B_w:=\ell^\infty(\mathbb Z;B_w),
\]
with norms
\[
\|\boldsymbol\mu\|_{\mathcal B_s}:=\sup_{n\in\mathbb Z}\|\mu_n\|_s,
\qquad
\|\boldsymbol\mu\|_{\mathcal B_w}:=\sup_{n\in\mathbb Z}\|\mu_n\|_w.
\]

The global operator associated with \((L_n)\) is
\[
\mathbb F:\mathcal B_s\to\mathcal B_s,
\qquad
(\mathbb F\boldsymbol\mu)_n:=L_{n-1}\mu_{n-1},
\qquad n\in\mathbb Z.
\]
Similarly, the global operator associated with \((M_n^{(m)})\) is
\[
\mathbb F_m:\mathcal B_s\to\mathcal B_s,
\qquad
(\mathbb F_m\boldsymbol\mu)_n:=M_{n-1}^{(m)}\mu_{n-1},
\qquad n\in\mathbb Z.
\]
We also define the global projection as
\[
\Pi_m:\mathcal B_w\to\mathcal B_w,
\qquad
(\Pi_m\boldsymbol\mu)_n:=\pi_m\mu_n.
\]

\begin{proposition}[Global-map estimates]
\label{prop:abstract_global_projection}
Under the assumptions of Proposition~\ref{cor:abstract_sequential_projection}, the following hold.

\begin{enumerate}
\item[\textup{(i)}] The global projection \(\Pi_m\) is bounded on \(\mathcal B_w\) and on \(\mathcal B_s\), with
\[
\|\Pi_m\boldsymbol\mu\|_{\mathcal B_w}\le C_w\|\boldsymbol\mu\|_{\mathcal B_w},
\qquad
\|\Pi_m\boldsymbol\mu\|_{\mathcal B_s}\le C_s\|\boldsymbol\mu\|_{\mathcal B_s},
\]
and
\[
\|\Pi_m\boldsymbol\mu-\boldsymbol\mu\|_{\mathcal B_w}
\le
\delta_m\|\boldsymbol\mu\|_{\mathcal B_s}
\qquad \forall \boldsymbol\mu\in\mathcal B_s.
\]

\item[\textup{(ii)}] The global discretized operator can be written as
\[
\mathbb F_m=\Pi_m\,\mathbb F\,\Pi_m.
\]

\item[\textup{(iii)}] The operator \(\mathbb F\) satisfies the strong estimate
\[
\|\mathbb F\boldsymbol\mu\|_{\mathcal B_s}
\le
\lambda \|\boldsymbol\mu\|_{\mathcal B_s}+B\|\boldsymbol\mu\|_{\mathcal B_w}
\qquad \forall \boldsymbol\mu\in\mathcal B_s,
\]
and the weak estimate
\[
\|\mathbb F\boldsymbol\mu\|_{\mathcal B_w}
\le
C_L\|\boldsymbol\mu\|_{\mathcal B_w}
\qquad \forall \boldsymbol\mu\in\mathcal B_w.
\]

\item[\textup{(iv)}] The operator \(\mathbb F_m\) satisfies
\[
\|\mathbb F_m\boldsymbol\mu\|_{\mathcal B_s}
\le
C_s\lambda C_s\,\|\boldsymbol\mu\|_{\mathcal B_s}
+
C_sBC_w\,\|\boldsymbol\mu\|_{\mathcal B_w}
\qquad \forall \boldsymbol\mu\in\mathcal B_s,
\]
and
\[
\|\mathbb F_m-\mathbb F\|_{\mathcal B_s\to\mathcal B_w}
\longrightarrow 0
\qquad\text{as }m\to\infty.
\]
More precisely,
\[
\|(\mathbb F_m-\mathbb F)\boldsymbol\mu\|_{\mathcal B_w}
\le
\delta_m\Bigl[(C_wC_L+\lambda)\|\boldsymbol\mu\|_{\mathcal B_s}
+
B\|\boldsymbol\mu\|_{\mathcal B_w}\Bigr]
\qquad \forall \boldsymbol\mu\in\mathcal B_s.
\]
\end{enumerate}
\end{proposition}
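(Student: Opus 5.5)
The plan is to check every item coordinatewise. Each global operator acts on a sequence \(\boldsymbol\mu=(\mu_n)_{n\in\mathbb Z}\) one index at a time, possibly with a shift, and the norms on \(\mathcal B_w,\mathcal B_s\) are suprema over \(n\). So every estimate follows from the corresponding single-operator estimate, applied at each index and then followed by a supremum over \(n\). The only point needing care is that all constants in \textbf{(P1)}--\textbf{(W)} and in Proposition~\ref{cor:abstract_sequential_projection} are uniform in \(n\) (and in \(m\)), so the suprema are finite and the bounds survive.

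For (i), apply \textbf{(P1)} to each \(\mu_n\). This gives \(\|\pi_m\mu_n\|_w\le C_w\|\mu_n\|_w\le C_w\|\boldsymbol\mu\|_{\mathcal B_w}\), and likewise in the strong norm. Taking the supremum over \(n\) yields boundedness of \(\Pi_m\), which in particular maps \(\mathcal B_s\) into \(\mathcal B_s\). In the same way, \textbf{(P2)} gives \(\|\pi_m\mu_n-\mu_n\|_w\le\delta_m\|\mu_n\|_s\le\delta_m\|\boldsymbol\mu\|_{\mathcal B_s}\), and taking the supremum gives the approximation bound.

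For (ii), compute directly. We have \((\Pi_m\boldsymbol\mu)_{n-1}=\pi_m\mu_{n-1}\), then \((\mathbb F\Pi_m\boldsymbol\mu)_n=L_{n-1}\pi_m\mu_{n-1}\), and finally \((\Pi_m\mathbb F\Pi_m\boldsymbol\mu)_n=\pi_mL_{n-1}\pi_m\mu_{n-1}=M^{(m)}_{n-1}\mu_{n-1}=(\mathbb F_m\boldsymbol\mu)_n\). The identity is purely algebraic; one only notes that every composition stays inside \(\mathcal B_s\), which follows from (i) and (iii).

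For (iii), apply \textbf{(LY)} to \(\mu_{n-1}\). This gives \(\|(\mathbb F\boldsymbol\mu)_n\|_s\le\lambda\|\mu_{n-1}\|_s+B\|\mu_{n-1}\|_w\le\lambda\|\boldsymbol\mu\|_{\mathcal B_s}+B\|\boldsymbol\mu\|_{\mathcal B_w}\). The index shift \(n\mapsto n-1\) is a bijection of \(\mathbb Z\), so it does not affect the suprema. The weak bound follows in the same way from \textbf{(W)}. For (iv), apply Proposition~\ref{cor:abstract_sequential_projection}(i) and (ii) to \(f=\mu_{n-1}\) with operator index \(n-1\), and take suprema. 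For the difference this reads \(\|((\mathbb F_m-\mathbb F)\boldsymbol\mu)_n\|_w=\|(M^{(m)}_{n-1}-L_{n-1})\mu_{n-1}\|_w\le\delta_m[(C_wC_L+\lambda)\|\boldsymbol\mu\|_{\mathcal B_s}+B\|\boldsymbol\mu\|_{\mathcal B_w}]\). Since \(\|\boldsymbol\mu\|_{\mathcal B_w}\le\|\boldsymbol\mu\|_{\mathcal B_s}\) (the inequality \(\|\cdot\|_w\le\|\cdot\|_s\) holds coordinatewise), the operator norm from \(\mathcal B_s\) to \(\mathcal B_w\) is at most \(\delta_m(C_wC_L+\lambda+B)\to0\).

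There is no real obstacle here. The step most deserving of attention is the uniformity of the constants in \(n\) in the proof of (iv). Without that uniformity the supremum could be infinite, and the norm convergence would fail even though each coordinate converges.
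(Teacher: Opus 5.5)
Your proposal is correct and follows essentially the same route as the paper: (i), (iii) and (iv) are checked coordinatewise from \textbf{(P1)}, \textbf{(P2)}, \textbf{(LY)}, \textbf{(W)} and Proposition~\ref{cor:abstract_sequential_projection}, taking the supremum over $n$, and (ii) is the direct computation $\pi_m L_{n-1}\pi_m\mu_{n-1}=M^{(m)}_{n-1}\mu_{n-1}$. Your explicit remark that the constants must be uniform in $n$ is the same point the paper uses implicitly when it takes suprema.
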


\begin{proof}
Part \textup{(i)} follows immediately from the corresponding properties of \(\pi_m\), taking the supremum over coordinates.

For part \textup{(ii)}, let \(\boldsymbol\mu\in\mathcal B_s\). Then
\[
(\Pi_m\mathbb F\Pi_m\boldsymbol\mu)_n
=
\pi_m L_{n-1}(\pi_m\mu_{n-1})
=
M_{n-1}^{(m)}\mu_{n-1}
=
(\mathbb F_m\boldsymbol\mu)_n.
\]

For part \textup{(iii)},
\[
\|(\mathbb F\boldsymbol\mu)_n\|_s
=
\|L_{n-1}\mu_{n-1}\|_s
\le
\lambda \|\mu_{n-1}\|_s+B\|\mu_{n-1}\|_w,
\]
and taking the supremum over \(n\) gives
\[
\|\mathbb F\boldsymbol\mu\|_{\mathcal B_s}
\le
\lambda \|\boldsymbol\mu\|_{\mathcal B_s}+B\|\boldsymbol\mu\|_{\mathcal B_w}.
\]
The weak estimate is similar.

For part \textup{(iv)}, the Lasota--Yorke estimate follows from Proposition~\ref{cor:abstract_sequential_projection} by taking the supremum over \(n\). Likewise,
\[
\|(\mathbb F_m\boldsymbol\mu-\mathbb F\boldsymbol\mu)_n\|_w
=
\|M_{n-1}^{(m)}\mu_{n-1}-L_{n-1}\mu_{n-1}\|_w
\]
is bounded by
\[
\delta_m\Bigl[(C_wC_L+\lambda)\|\mu_{n-1}\|_s+B\|\mu_{n-1}\|_w\Bigr],
\]
again by Proposition~\ref{cor:abstract_sequential_projection}. Taking the supremum over \(n\) gives the claimed estimate and hence
\[
\|\mathbb F_m-\mathbb F\|_{\mathcal B_s\to\mathcal B_w}\to0.
\]
\end{proof}

\begin{remark}
The classical Ulam discretization in the \(BV\)–\(L^1\) setting is a particular case of the abstract scheme above, with \(C_w=C_s=1\) and \(\delta_m\) equal to the mesh size of the partition. (see Section \ref{subsec:ulam_bv_l1})
In that case, the discretized operators satisfy the same Lasota--Yorke inequality as the original ones. The point of the present formulation is that the same mechanism applies, with minimal changes, to more general finite-rank approximation schemes.
\end{remark}

We are now in a position to combine the results of this section and recover, in the present nonautonomous setting, the same optimal approximation rate for the classical Ulam method known in the autonomoous case (see \cite{BoseMurray2001}).

\begin{proposition}[Approximation rate for Ulam-like discretizations]
\label{prop:projection_delta_log_delta}
Let \((L_n)_{n\in\mathbb Z}\) be a sequence of Markov operators acting on
\(
(B_s,\|\cdot\|_s)\subset (B_w,\|\cdot\|_w).
\)
Assume that \((L_n)\) satisfies Assumptions ~\ref{ass:weak_nonexp}, ~\ref{ass:elom} and the uniform Lasota--Yorke inequality \textup{(LY)}.

Let \((\pi_m)_{m\ge1}\) be a family of finite-rank projections satisfying \textup{(P1)} and \textup{(P2)}, with
\[
C_w\le 1,
\qquad
C_s\le 1.
\]
For each \(m\), define
\[
M_n^{(m)}:=\pi_mL_n\pi_m,
\qquad n\in\mathbb Z.
\]
Let
\[
\boldsymbol h=(h_n)_{n\in\mathbb Z}\in \mathcal P_s^{\mathbb Z}
\]
be the equivariant family of the original system, and let
\[
\boldsymbol h^{(m)}=(h_n^{(m)})_{n\in\mathbb Z}\in \mathcal P_s^{\mathbb Z}
\]
be an equivariant family of the discretized system.

Then there exist \(m_0\in\mathbb N\) and \(C>0\) such that, for every \(m\ge m_0\),
\[
\sup_{n\in\mathbb Z}\|h_n^{(m)}-h_n\|_w
\le
C\,\delta_m\,|\log \delta_m|
\]
where $\delta_m$ is as in assumption \textup{(P2)}. 
\end{proposition}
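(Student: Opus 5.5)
The plan is to apply Theorem~\ref{thm:dlogd} with $\widetilde L_n:=M_n^{(m)}$ and $\widetilde{\boldsymbol\mu}:=\boldsymbol h^{(m)}$, and then verify each of its hypotheses with constants uniform in $m$.

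\emph{Assumption~\ref{ass:weak_nonexp}.} This holds for $L_n$ by hypothesis. For the discretized operators we use $C_w\le1$ and the weak contraction of $L_n$:
\[
\|M_n^{(m)}\mu\|_w\le\|L_n\pi_m\mu\|_w\le\|\pi_m\mu\|_w\le\|\mu\|_w .
\]

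\emph{Assumption~\ref{ass:elom}.} This is needed only for the reference system $(L_n)$, and it is assumed.

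\emph{Assumption~\ref{ass:mixed_delta}.} Proposition~\ref{cor:abstract_sequential_projection}(ii), with $C_L=1$, gives
\[
\|M_n^{(m)}-L_n\|_{B_s\to B_w}\le\delta_m(1+\lambda+B)=:\delta'_m .
\]
This uses $\|f\|_w\le\|f\|_s$.

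\emph{Assumption~\ref{ass:Ms}.} This is the step that needs genuine work: we must bound $\sup_n\|h_n\|_s$ and $\sup_n\|h^{(m)}_n\|_s$ uniformly in $m$.
\begin{itemize}
\item For $\boldsymbol h$, the bound is part of the definition of an equivariant family in $\mathcal P_s^{\mathbb Z}$, so $M_s<\infty$.
\item For $\boldsymbol h^{(m)}$, the definition only gives $\sup_n\|h_n^{(m)}\|_s<\infty$ for each fixed $m$. We upgrade this using Proposition~\ref{cor:abstract_sequential_projection}(i) with $C_s,C_w\le1$, which gives $\|M_n^{(m)}f\|_s\le\lambda\|f\|_s+B\|f\|_w$.
\item Suppose $\lambda<1$, the usual Lasota--Yorke regime, and write $S_m:=\sup_n\|h^{(m)}_n\|_s<\infty$. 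Equivariance $h^{(m)}_{n+1}=M^{(m)}_nh^{(m)}_n$, together with $\|h_n^{(m)}\|_w\le c$ (a uniform bound for probability measures in the weak norm), gives $S_m\le\lambda S_m+Bc$. Hence $S_m\le Bc/(1-\lambda)=:\widetilde M_s$, independent of $m$.
\item If $\lambda\ge1$ is allowed, we instead iterate (LY) $k$ times to get a contracting iterate, which requires the compositions of the $M^{(m)}_j$ to satisfy an iterated Lasota--Yorke inequality. This is the main obstacle. The first thing to try is to use the structure $\pi_m L\pi_m$ with $\|\pi_m\|_s\le1$, so that compositions inherit the iterated inequality of the $L_j$ up to $\pi_m$ insertions.
\end{itemize}
The weak bound $c$ above relies on the standard fact (for instance $B_w=L^1$, or the Wasserstein-type norm) that probability measures have $\|\mu\|_w\le c$. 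Positivity and mass preservation of $M^{(m)}_n$ are also implicitly needed for $\boldsymbol h^{(m)}\in\mathcal P_s^{\mathbb Z}$; these are given by hypothesis, since $\boldsymbol h^{(m)}$ is assumed to be such a family.

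\emph{Conclusion.} With all hypotheses verified and constants $C,\lambda_{\rm ml},M_s,\widetilde M_s$ independent of $m$, Theorem~\ref{thm:dlogd} yields
\[
\sup_n\|h_n^{(m)}-h_n\|_w\le K\delta'_m\bigl(1+\log(1/\delta'_m)\bigr)
\]
whenever $\delta'_m<1$. Choose $m_0$ such that $\delta'_m<1$ and $\delta_m<e^{-1}$ for $m\ge m_0$. Then $1+|\log\delta'_m|\le 2|\log\delta_m|+|\log(1+\lambda+B)|\le C'|\log\delta_m|$. Absorbing the constants gives $C\delta_m|\log\delta_m|$.
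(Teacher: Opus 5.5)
Your proposal is correct and follows the paper's route. You apply Theorem~\ref{thm:dlogd} with $\widetilde L_n=M_n^{(m)}$, get the mixed closeness $\tau_m\le C_1\delta_m$ from Proposition~\ref{cor:abstract_sequential_projection}(ii), and get the $m$-uniform strong bound on $\boldsymbol h^{(m)}$ from the Lasota--Yorke inequality inherited via Proposition~\ref{cor:abstract_sequential_projection}(i) with $C_s,C_w\le 1$.

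Your write-up is more explicit than the paper's in three places: you check weak nonexpansion of $M_n^{(m)}$, you run the self-bounding step $S_m\le\lambda S_m+Bc$ using $S_m<\infty$, and you pass carefully from $\tau_m|\log\tau_m|$ to $\delta_m|\log\delta_m|$. Your unresolved $\lambda\ge 1$ branch can simply be dropped. The paper's assertion that Assumption~\ref{ass:Ms} is ``automatically satisfied'' tacitly requires $\lambda<1$ in exactly the same way, as the term Lasota--Yorke inequality conventionally implies.
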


\begin{proof}
 Assumption~\ref{ass:Ms} is automatically satisfied in the present setting, since the discretized operators inherit the same Lasota--Yorke inequality by Proposition~\ref{cor:abstract_sequential_projection}, and here \(C_w\le1\), \(C_s\le1\).

Still by Proposition~\ref{cor:abstract_sequential_projection}, the discretized operators satisfy the mixed strong-to-weak estimate
\[
\sup_{n\in\mathbb Z}\|M_n^{(m)}-L_n\|_{B_s\to B_w}
\le C_1\,\delta_m
\]
for some constant \(C_1>0\) independent of \(m\). 
 Hence both the original system and the discretized one satisfy the hypotheses of Theorem~\ref{thm:dlogd}.

Applying Theorem~\ref{thm:dlogd} with perturbation size
\[
\tau_m:=\sup_{n\in\mathbb Z}\|M_n^{(m)}-L_n\|_{B_s\to B_w}\le C_1\,\delta_m,
\]
we obtain
\[
\|\boldsymbol h^{(m)}-\boldsymbol h\|_{\mathcal B_w}
\le
C_2\,\tau_m\,|\log \tau_m|
\]
for some constant \(C_2>0\) independent of \(m\) proving the claim.
\end{proof}

\section{Memory loss for projection-based discretizations}
\label{subsec:ulam_loss_of_memory}

In this section we show that, under suitable assumptions, a sufficiently fine finite-dimensional reduction inherits exponential memory loss from the original system, with uniform constants for all sufficiently large resolutions.
This result will be used as a tool in Section \ref{S4}, but it is also of independent interest, as it shows that a suitable finite element reduction of a system having exponential memory loss preserves this important feature of the system.

Let \((L_n)_{n\in\mathbb Z}\) be a sequence of Markov operators
\(
L_n:B_w\to B_w,L_n:B_s\to B_s, \) projections \( \ (\pi_m)_{m\ge1}
\)
and discretized operators
\(
   M_n^{(m)}:=\pi_mL_n\pi_m,
\qquad n\in\mathbb Z
\)
as above satisfying uniformly (P1)-(W), furthermore suppose that

\medskip

\noindent
\textbf{(M)} The projections $\pi_m$ are mass preserving (i.e. $[\pi_m (f)](X)=f(X)$), and 

$C_w,C_s,C_L\leq 1$.

\medskip

The additional assumption (M) ensures that both the original and the discretized operators preserve the zero-mass subspaces, and that the weak norm is not enlarged by the projection or by one step of the dynamics.
Under these assumptions, both the original and the discretized operators preserve the zero-mass subspaces \(V_s\) and \(V_w\), and satisfy a common one-step Lasota--Yorke inequality.

There exist constants \(B\ge 1\) and \(\lambda_1\in[0,1)\) such that, for every \(n\in\mathbb Z\), every \(m\ge1\), and every \(f\in B_s\),
\begin{equation}\label{eq:common_LY_original}
\|L_n f\|_w\le \|f\|_w,
\qquad
\|L_n f\|_s\le \lambda_1\|f\|_s+B\|f\|_w,
\end{equation}
and
\begin{equation}\label{eq:common_LY_discretized}
\|M_n^{(m)} f\|_w\le \|f\|_w,
\qquad
\|M_n^{(m)} f\|_s\le \lambda_1\|f\|_s+B\|f\|_w.
\end{equation}

For \(j\in\mathbb Z\) and \(n\ge1\), let us denote
\[
L^{(j,j+n-1)}:=L_{j+n-1}\circ\cdots\circ L_j,
\qquad
M^{(m),(j,j+n-1)}:=M_{j+n-1}^{(m)}\circ\cdots\circ M_j^{(m)}.
\]

We first record the corresponding Lasota--Yorke estimate for finite sequential compositions.

\begin{lemma}[Sequential Lasota--Yorke estimate]
\label{lem:sequential_LY_memory}
Assume \eqref{eq:common_LY_original}. Then, for every \(j\in\mathbb Z\), \(n\ge1\), and \(f\in B_s\),
\[
\|L^{(j,j+n-1)}f\|_w\le \|f\|_w
\]
and
\[
\|L^{(j,j+n-1)}f\|_s
\le
\lambda_1^n\|f\|_s+\frac{B}{1-\lambda_1}\|f\|_w.
\]
The same estimate holds for \(M^{(m),(j,j+n-1)}\), uniformly in \(m\).
\end{lemma}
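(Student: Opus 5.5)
The plan is to argue by induction on the number of steps \(n\), applying the one-step inequalities \eqref{eq:common_LY_original} to the composition one factor at a time.

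The weak estimate comes directly from the first inequality in \eqref{eq:common_LY_original}. For \(n=1\) it is that inequality itself. If it holds for \(n\), set \(g:=L^{(j,j+n-1)}f\); then
\[
\|L^{(j,j+n)}f\|_w=\|L_{j+n}g\|_w\le\|g\|_w\le\|f\|_w .
\]
Since \(B_s\) is preserved by each \(L_k\), every intermediate iterate lies in \(B_s\), so the strong inequality may be applied at each step.

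For the strong estimate I will prove the sharper statement
\[
\|L^{(j,j+n-1)}f\|_s\le \lambda_1^n\|f\|_s+B\Bigl(\sum_{i=0}^{n-1}\lambda_1^i\Bigr)\|f\|_w ,
\]
again by induction on \(n\). The case \(n=1\) is exactly \eqref{eq:common_LY_original}. For the inductive step, with \(g\) as above,
\[
\|L_{j+n}g\|_s\le\lambda_1\|g\|_s+B\|g\|_w .
\]
Insert the induction hypothesis for \(\|g\|_s\) and the weak estimate \(\|g\|_w\le\|f\|_w\). This gives
\[
\lambda_1^{n+1}\|f\|_s+B\Bigl(\sum_{i=1}^{n}\lambda_1^{i}+1\Bigr)\|f\|_w ,
\]
which is the claim for \(n+1\). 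Because \(\lambda_1\in[0,1)\), the geometric sum is bounded by \(1/(1-\lambda_1)\), and the stated bound follows. None of the constants depend on \(j\), since the one-step inequalities are uniform in the time index.

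For the discretized compositions \(M^{(m),(j,j+n-1)}\) I will repeat the same argument word for word, now using \eqref{eq:common_LY_discretized}. Those one-step constants \(\lambda_1,B\) do not depend on \(m\) (and, as the text notes, under \textbf{(M)} the discretized operators satisfy this common one-step inequality), so the resulting constants are uniform in \(m\). There is no real obstacle here. The only points needing care are keeping the weak bound at each stage in terms of \(\|f\|_w\) and not an intermediate iterate, and noting that the maps send \(B_s\) into itself.
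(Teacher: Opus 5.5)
Your proposal is correct and follows the paper's own argument: you iterate the one-step weak contraction and the one-step Lasota--Yorke inequality to obtain the geometric sum $B\sum_{k=0}^{n-1}\lambda_1^k\|f\|_w$, bound it by $B/(1-\lambda_1)$, and repeat verbatim with \eqref{eq:common_LY_discretized} for the discretized blocks. Your explicit induction simply spells out the iteration that the paper states in one line.
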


\begin{proof}
The weak estimate follows immediately by iteration of \(\|L_n f\|_w\le \|f\|_w\). For the strong estimate, iterating the one-step Lasota--Yorke inequality gives
\[
\|L^{(j,j+n-1)}f\|_s
\le
\lambda_1^n\|f\|_s
+
B\sum_{k=0}^{n-1}\lambda_1^k\|f\|_w
\le
\lambda_1^n\|f\|_s+\frac{B}{1-\lambda_1}\|f\|_w.
\]
The proof for \(M^{(m),(j,j+n-1)}\) is identical, using \eqref{eq:common_LY_discretized}.
\end{proof}

We next estimate the distance between an \(n\)-step discretized block and the corresponding original block.
\begin{lemma}[Distance between original and discretized blocks]
\label{lem:ulam_block_closeness}
Under the assumptions stated at beginning of this section. Let
\[
\varepsilon_m:=\sup_{n\in\mathbb Z}\|M_n^{(m)}-L_n\|_{B_s\to B_w}.
\]
Then, for every \(j\in\mathbb Z\), every \(n\ge1\), and every \(g\in B_s\),
\begin{equation}\label{eq:block_closeness_estimate}
\|M^{(m),(j,j+n-1)}g-L^{(j,j+n-1)}g\|_w
\le
\varepsilon_m\left(\frac{1}{1-\lambda_1}\|g\|_s+n\frac{B}{1-\lambda_1}\|g\|_w\right).
\end{equation}
\end{lemma}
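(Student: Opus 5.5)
We prove the estimate with a telescoping (Duhamel-type) decomposition of the difference of the two $n$-step compositions. Write
\[
M^{(m),(j,j+n-1)}-L^{(j,j+n-1)}
=\sum_{k=0}^{n-1} M^{(m),(j+k+1,j+n-1)}\bigl(M^{(m)}_{j+k}-L_{j+k}\bigr)L^{(j,j+k-1)},
\]
with the convention that empty compositions are the identity. The identity telescopes: term $k$ equals $M^{(m),(j+k,j+n-1)}L^{(j,j+k-1)}-M^{(m),(j+k+1,j+n-1)}L^{(j,j+k)}$. Apply both sides to $g\in B_s$ and estimate each summand in $\|\cdot\|_w$.

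For each summand we first remove the outer discretized block. By \eqref{eq:common_LY_discretized} and Lemma~\ref{lem:sequential_LY_memory}, this block is a weak contraction. The summand is therefore bounded by $\|(M^{(m)}_{j+k}-L_{j+k})L^{(j,j+k-1)}g\|_w$. By the definition of $\varepsilon_m$, this is at most $\varepsilon_m\|L^{(j,j+k-1)}g\|_s$. Lemma~\ref{lem:sequential_LY_memory} applied to the original block gives
\[
\|L^{(j,j+k-1)}g\|_s\le \lambda_1^k\|g\|_s+\frac{B}{1-\lambda_1}\|g\|_w,
\]
and this also holds for $k=0$, trivially.

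Summing over $k=0,\dots,n-1$, the strong parts give $\sum_k\lambda_1^k\le \frac{1}{1-\lambda_1}$. The weak parts give at most $n\frac{B}{1-\lambda_1}\|g\|_w$. Together these yield \eqref{eq:block_closeness_estimate}.

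The only step needing care is the ordering of factors in the telescoping sum. The discretized operators must sit on the outside, where only their weak contraction is used. The original operators must sit on the inside, where their Lasota--Yorke bound converts the strong norm back into $\|g\|_s$ with geometric decay, which is what makes the $\|g\|_s$ coefficient independent of $n$. Either ordering would in fact work, since \eqref{eq:common_LY_discretized} gives the same bounds for both families, but the one above matches the stated constants directly.
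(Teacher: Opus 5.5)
Your proof is correct and matches the paper's argument: the paper inducts on $n$, adding and subtracting $M^{(m)}_{j+n-1}L^{(j,j+n-2)}g$ at each step, and unrolling that induction gives exactly your telescoping sum, with the discretized operators outside (weak contraction) and the original ones inside (sequential Lasota--Yorke). Your non-inductive form is slightly cleaner, since it tracks the partial sums $\sum_{k<n}\lambda_1^k$ directly, whereas the paper's induction only closes once its hypothesis is tacitly strengthened from $\frac{1}{1-\lambda_1}$ to these partial sums.
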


\begin{proof}
We argue by induction on \(n\). For \(n=1\), the claim is immediate from the definition of \(\varepsilon_m\), since
\[
\|M_j^{(m)}g-L_jg\|_w\le \varepsilon_m\|g\|_s.
\]

Assume now that the estimate holds for \(n-1\). Then
\[
\begin{aligned}
&M^{(m),(j,j+n-1)}g-L^{(j,j+n-1)}g\\
&\qquad=
M_{j+n-1}^{(m)}M^{(m),(j,j+n-2)}g
-
L_{j+n-1}L^{(j,j+n-2)}g.
\end{aligned}
\]
Adding and subtracting \(M_{j+n-1}^{(m)}L^{(j,j+n-2)}g\), and using the weak contraction of \(M_{j+n-1}^{(m)}\), we obtain
\[
\begin{aligned}
\|M^{(m),(j,j+n-1)}g-L^{(j,j+n-1)}g\|_w
&\le
\|M^{(m),(j,j+n-2)}g-L^{(j,j+n-2)}g\|_w\\
&\quad+
\|(M_{j+n-1}^{(m)}-L_{j+n-1})L^{(j,j+n-2)}g\|_w.
\end{aligned}
\]
By the induction hypothesis,
\[
\|M^{(m),(j,j+n-2)}g-L^{(j,j+n-2)}g\|_w
\le
\varepsilon_m\left(\frac{1}{1-\lambda_1}\|g\|_s+(n-1)\frac{B}{1-\lambda_1}\|g\|_w\right).
\]
For the second term, using the definition of \(\varepsilon_m\) and Lemma~\ref{lem:sequential_LY_memory},
\[
\begin{aligned}
\|(M_{j+n-1}^{(m)}-L_{j+n-1})L^{(j,j+n-2)}g\|_w
&\le
\varepsilon_m\,\|L^{(j,j+n-2)}g\|_s\\
&\le
\varepsilon_m\left(\lambda_1^{n-1}\|g\|_s+\frac{B}{1-\lambda_1}\|g\|_w\right).
\end{aligned}
\]
Combining the two bounds gives
\[
\|M^{(m),(j,j+n-1)}g-L^{(j,j+n-1)}g\|_w
\le
\varepsilon_m\left(C_{n-1}\|g\|_s+n\frac{B}{1-\lambda_1}\|g\|_w\right),
\]
where \(C_n=C_{n-1}+\lambda_1^{n-1}\). Since
\[
\sum_{k\ge0}\lambda_1^k=\frac{1}{1-\lambda_1},
\]
we have \(C_n\le \frac{1}{1-\lambda_1}\), which proves \eqref{eq:block_closeness_estimate}.
\end{proof}

We now prove that a suitable weak block estimate for the original system implies exponential memory loss for the discretized one.

\begin{proposition}[Memory loss for the discretized system]
\label{prop:ulam_inherits_memory}
Under the assumptions stated at beginning of this section. Suppose there is an integer \(M\ge1\) such that
\begin{equation}\label{eq:M_choice_lambda}
\lambda_1^M\le \frac{1}{10\left(\frac{B}{1-\lambda_1}+1\right)}
\end{equation}
and
\begin{equation}\label{eq:original_block_weak_small}
\|L^{(j,j+M-1)}v\|_w
\le
\frac{1-\lambda_1}{10B}\,\|v\|_s
\qquad
\forall j\in\mathbb Z,\ \forall v\in V_s.
\end{equation}
Assume finally that \(m\) is such that
\begin{equation}\label{eq:epsilon_small_memory}
\varepsilon_m
=
\sup_{n\in\mathbb Z}\|M_n^{(m)}-L_n\|_{B_s\to B_w}
\le
\frac{7(1-\lambda_1)^2}{10MB\left(\frac{1}{1-\lambda_1}+B\right)}.
\end{equation}
Then, for every \(j\in\mathbb Z\) and every \(g\in V_s\),
\begin{equation}\label{eq:2M_block_contraction}
\|M^{(m),(j,j+2M-1)}g\|_s\le \frac{9}{10}\|g\|_s.
\end{equation}
In particular, the discretized system has  uniform exponential memory loss provided $m$ is large enough: there exist \( C>0\) and \(\rho\in(0,1)\), such that for each $m $ satisfying \eqref{eq:epsilon_small_memory}
\[
\|M^{(m),(j,j+n-1)}g\|_s
\le
C\rho^n\|g\|_s
\qquad
\forall j\in\mathbb Z,\ \forall n\ge1,\ \forall g\in V_s.
\]
\end{proposition}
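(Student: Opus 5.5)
The plan is to split the $2M$-block into two $M$-blocks, $M^{(m),(j,j+2M-1)}=M^{(m),(j+M,j+2M-1)}\circ M^{(m),(j,j+M-1)}$. The first block makes the weak norm small, by comparison with the original system. The second block converts this weak smallness into strong smallness through the Lasota--Yorke inequality. First I record that, by \textbf{(M)}, each $M_n^{(m)}=\pi_mL_n\pi_m$ is mass preserving. Hence $V_s$ is invariant under every composition, and $h:=M^{(m),(j,j+M-1)}g\in V_s$ whenever $g\in V_s$.

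\emph{Step 1 (strong bound via the second block).} Apply Lemma~\ref{lem:sequential_LY_memory} to the discretized block acting on $h$:
\[
\|M^{(m),(j,j+2M-1)}g\|_s\le \lambda_1^M\|h\|_s+\frac{B}{1-\lambda_1}\|h\|_w .
\]
Applying the same lemma to $h$ itself, and using $\|g\|_w\le\|g\|_s$, gives $\|h\|_s\le(\lambda_1^M+\frac{B}{1-\lambda_1})\|g\|_s\le(1+\frac{B}{1-\lambda_1})\|g\|_s$. By \eqref{eq:M_choice_lambda}, the first term is then at most $\frac1{10}\|g\|_s$.

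\emph{Step 2 (weak smallness of $h$).} Write $h=L^{(j,j+M-1)}g+\bigl(M^{(m),(j,j+M-1)}g-L^{(j,j+M-1)}g\bigr)$. The first piece is bounded by \eqref{eq:original_block_weak_small}, which is applicable because $g\in V_s$. The second piece is bounded by Lemma~\ref{lem:ulam_block_closeness} with $n=M$, again using $\|g\|_w\le\|g\|_s$. Together these give
\[
\|h\|_w\le\Bigl(\frac{1-\lambda_1}{10B}+\varepsilon_m\frac{1+MB}{1-\lambda_1}\Bigr)\|g\|_s .
\]
Multiplying by $\frac{B}{1-\lambda_1}$, the first part contributes exactly $\frac1{10}\|g\|_s$. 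For the second part I use $1+MB\le M(1+B)\le M\bigl(\frac1{1-\lambda_1}+B\bigr)$ together with \eqref{eq:epsilon_small_memory}. This shows that $\frac{B\varepsilon_m(1+MB)}{(1-\lambda_1)^2}\le\frac7{10}$. Summing the three contributions gives $\frac1{10}+\frac1{10}+\frac7{10}=\frac9{10}$, which is \eqref{eq:2M_block_contraction}. I expect this constant bookkeeping, in particular matching the ad hoc thresholds in \eqref{eq:M_choice_lambda}--\eqref{eq:epsilon_small_memory}, to be the only delicate point. There is slack in it, since $(1+B)/(\frac1{1-\lambda_1}+B)\le1$.

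\emph{Step 3 (exponential memory loss).} For $n\ge1$, write $n=2Mq+r$ with $0\le r<2M$. Apply \eqref{eq:2M_block_contraction} $q$ times; this is legitimate because $V_s$ is invariant and the contraction estimate is uniform in the starting index $j$. The remaining $r$ steps are controlled by Lemma~\ref{lem:sequential_LY_memory}, which bounds them in $B_s\to B_s$ norm by $K:=1+\frac{B}{1-\lambda_1}$. This yields $\|M^{(m),(j,j+n-1)}g\|_s\le K(9/10)^q\|g\|_s$. Setting $\rho:=(9/10)^{1/(2M)}$ and $C:=K\cdot\frac{10}{9}$, we have $(9/10)^q\le\frac{10}{9}\rho^n$, and the claim follows. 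These constants depend only on $\lambda_1,B,M$, so the estimate is uniform over all $m$ satisfying \eqref{eq:epsilon_small_memory}.
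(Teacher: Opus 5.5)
Your proof is correct and follows the paper's argument essentially step for step. You split the $2M$-block into two $M$-blocks, make $\|h\|_w$ small via \eqref{eq:original_block_weak_small} plus Lemma~\ref{lem:ulam_block_closeness}, and convert this into the strong bound $\frac{1}{10}+\frac{1}{10}+\frac{7}{10}=\frac{9}{10}$ with the sequential Lasota--Yorke estimate, then iterate blockwise; your explicit check $1+MB\le M\bigl(\frac{1}{1-\lambda_1}+B\bigr)$ and explicit constants $\rho=(9/10)^{1/(2M)}$, $C=\frac{10}{9}\bigl(1+\frac{B}{1-\lambda_1}\bigr)$ fill in details the paper leaves implicit.
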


\begin{proof}
Let \(g\in V_s\). Applying Lemma~\ref{lem:ulam_block_closeness} with \(n=M\), and using \(\|g\|_w\le \|g\|_s\), we obtain
\[
\begin{aligned}
\|M^{(m),(j,j+M-1)}g-L^{(j,j+M-1)}g\|_w
&\le
\varepsilon_m\left(\frac{1}{1-\lambda_1}+M\frac{B}{1-\lambda_1}\right)\|g\|_s\\
&\le
\frac{7(1-\lambda_1)}{10B}\|g\|_s,
\end{aligned}
\]
by \eqref{eq:epsilon_small_memory}. Hence, by \eqref{eq:original_block_weak_small},
\[
\begin{aligned}
\|M^{(m),(j,j+M-1)}g\|_w
&\le
\|L^{(j,j+M-1)}g\|_w
+
\|M^{(m),(j,j+M-1)}g-L^{(j,j+M-1)}g\|_w\\
&\le
\frac{1-\lambda_1}{10B}\|g\|_s+\frac{7(1-\lambda_1)}{10B}\|g\|_s\\
&=
\frac{8(1-\lambda_1)}{10B}\|g\|_s.
\end{aligned}
\]
We now apply the sequential Lasota--Yorke estimate of Lemma~\ref{lem:sequential_LY_memory} to the second block of length \(M\), with input \(h:=M^{(m),(j,j+M-1)}g\in V_s\). This gives
\[
\|M^{(m),(j+M,j+2M-1)}h\|_s
\le
\lambda_1^M\|h\|_s+\frac{B}{1-\lambda_1}\|h\|_w.
\]
Using again Lemma~\ref{lem:sequential_LY_memory},
\[
\|h\|_s
=
\|M^{(m),(j,j+M-1)}g\|_s
\le
\lambda_1^M\|g\|_s+\frac{B}{1-\lambda_1}\|g\|_w
\le
\left(\frac{B}{1-\lambda_1}+1\right)\|g\|_s.
\]
Therefore, by \eqref{eq:M_choice_lambda},
\[
\lambda_1^M\|h\|_s
\le
\frac{1}{10}\|g\|_s.
\]
Moreover, by the weak estimate just proved,
\[
\frac{B}{1-\lambda_1}\|h\|_w
\le
\frac{B}{1-\lambda_1}\cdot \frac{8(1-\lambda_1)}{10B}\|g\|_s
=
\frac{8}{10}\|g\|_s.
\]
Combining the two bounds yields
\[
\|M^{(m),(j,j+2M-1)}g\|_s
\le
\frac{1}{10}\|g\|_s+\frac{8}{10}\|g\|_s
=
\frac{9}{10}\|g\|_s,
\]
which proves \eqref{eq:2M_block_contraction}.

Finally, iterating the \(2M\)-step contraction blockwise, one obtains
\[
\|M^{(m),(j,j+2kM-1)}g\|_s\le \left(\frac{9}{10}\right)^k\|g\|_s.
\]
The general case \(n\ge1\) follows by decomposing \(n\) into blocks of length \(2M\) plus a remainder, and using the uniform boundedness of finite compositions provided by Lemma~\ref{lem:sequential_LY_memory}.
\end{proof}

\begin{remark}
We observe that assumption \eqref{eq:original_block_weak_small} is an immediate consequence of strong loss of memory for the operators \(L_n\). Accordingly, Proposition~\ref{prop:ulam_inherits_memory} can be interpreted as a rigorous formulation of the principle that loss of memory is preserved under sufficiently fine discretization. 
\end{remark}

\begin{remark}
The proposition shows that, in the presence of a common one-step Lasota--Yorke inequality, uniform exponential memory loss for the reduced system can be obtained from three ingredients: i) a weak block estimate for the original system; ii) a sufficiently small one-step mixed distance between original and discretized operators; and iii) the preservation of the Lasota--Yorke structure under discretization. 
\end{remark}

\section{Linear response and Ulam approximation}\label{S4}

In this section we show that under suitable additional assumptions on the non-autonomous system - i.e. supposing that the transfer operator is  in some sense regularizing, like it happens for systems with smoothly distributed noise - the linear response of the system can be approximated by the linear response of its Ulam discretization.
We remark that a result of this kind is useful and original also in the autonomous case.

In order to introduce the main ingredients whose approximation we are going to address in this section, we briefly recall the linear response framework in the present sequential nonautonomous setting; full details can be found in \cite{GL26}. In particular, the abstract linear response result based on the global-map resolvent is given in \cite[Theorem~11]{GL26}; see also \cite[Remark~12]{GL26} for the corresponding Neumann-series interpretation.

Let \((L_n^\varepsilon)_{n\in\mathbb Z}\) be a family of Markov operators on \(B_s\subset B_w\), depending on a parameter \(\varepsilon\), and suppose that for each \(\varepsilon\) there exists an equivariant family
\[
\boldsymbol h^\varepsilon=(h_n^\varepsilon)_{n\in\mathbb Z}\in \mathcal P_s^{\mathbb Z},
\qquad
h_{n+1}^\varepsilon=L_n^\varepsilon h_n^\varepsilon
\quad \forall n\in\mathbb Z.
\]
At \(\varepsilon=0\), write
\[
L_n:=L_n^0,
\qquad
h_n:=h_n^0,
\qquad
\boldsymbol h:=(h_n)_{n\in\mathbb Z}.
\]

Introduce the global operator
\[
\mathbb F_\varepsilon:\mathcal B_s\to\mathcal B_s,
\qquad
(\mathbb F_\varepsilon\boldsymbol\mu)_n:=L_{n-1}^\varepsilon \mu_{n-1},
\]
so that the equivariance relation becomes
\[
\mathbb F_\varepsilon(\boldsymbol h^\varepsilon)=\boldsymbol h^\varepsilon.
\]
Assume that \(\varepsilon\mapsto L_n^\varepsilon\) and \(\varepsilon\mapsto \boldsymbol h^\varepsilon\) are differentiable at \(\varepsilon=0\), and denote
\[
\dot L_n:=\left.\partial_\varepsilon L_n^\varepsilon\right|_{\varepsilon=0},
\qquad
\dot{\boldsymbol h}:=\left.\partial_\varepsilon \boldsymbol h^\varepsilon\right|_{\varepsilon=0}.
\]
Differentiating the equivariance relation yields
\[
(I-\mathbb F_0)\dot{\boldsymbol h}=\boldsymbol{\dot{L}h},
\qquad
(\boldsymbol{\dot{L}h})_n:=\dot L_{n-1}h_{n-1}.
\]
Thus, provided \(I-\mathbb F_0\) is invertible on the relevant zero-mass sequence space,
\[
\dot{\boldsymbol h}=(I-\mathbb F_0)^{-1}\boldsymbol{\dot{L}h}.
\]
Whenever the Neumann series converges, this may be written as
\begin{equation}\label{r3}
\dot{\boldsymbol h}=\sum_{k\ge0}\mathbb F_0^k\boldsymbol{\dot{L}h}.
\end{equation}
In \cite{GL26} the reader can find the correct assumptions and the mathematical setting in which \eqref{r3} makes sense
and converge.

The same strategy can be applied  to the Ulam discretization. Let \(\pi_m:B_w\to B_w\) be the projection associated with the classical Ulam discretization (see Section \ref{subsec:ulam_bv_l1} for precise definitions) of resolution \(m\). Let us follow the construction to fix the notation we will use in the following. Let us set
\[
M_n^\varepsilon:=\pi_m L_n^\varepsilon \pi_m,
\qquad
(\mathbb F_m^\varepsilon\boldsymbol\mu)_n:=M_{n-1}^\varepsilon\mu_{n-1}.
\]
Assume that for each \(\varepsilon\) there exists an equivariant family
\[
\boldsymbol h^{\varepsilon,m}=(h_n^{\varepsilon,m})_{n\in\mathbb Z},
\qquad
h_{n+1}^{\varepsilon,m}=M_n^\varepsilon h_n^{\varepsilon,m}.
\]
Writing
\[
M_n:=M_n^0,
\qquad
\boldsymbol h^m:=(h_n^m)_{n\in\mathbb Z},
\qquad
\dot M_n:=\left.\partial_\varepsilon M_n^\varepsilon\right|_{\varepsilon=0},
\]
and assuming differentiability of \(\varepsilon\mapsto \boldsymbol h^{\varepsilon,m}\) at \(\varepsilon=0\), with derivative
\[
\hat{\boldsymbol h}^m:=\left.\partial_\varepsilon \boldsymbol h^{\varepsilon,m}\right|_{\varepsilon=0},
\]
one obtains
\[
(I-\mathbb F_m^0)\dot{\boldsymbol h}^m=\boldsymbol{ \dot Mh^m},
\qquad
(\boldsymbol{ \dot Mh^m})_n:=\dot M_{n-1}h_{n-1}^m,
\]
and hence
\[
\dot{\boldsymbol h}^m=(I-\mathbb F_m^0)^{-1}\boldsymbol{ \dot Mh^m}.
\]

Details on the response formula for this non homogeneous finite Markov chains case are given in \cite{Lucarini2026}.

The term $\dot{\boldsymbol h}^m$ is what one can compute as linear response of the Ulam discretization of our system.  
In what follows we will show that, under suitable assumptions, the linear response  $\dot{\boldsymbol h}^m$ of the reduced system converges in the weak norm to the projection $\Pi_m \dot{\boldsymbol h}$ of the linear response of the original system.

The comparison between \(\dot{\boldsymbol h}\) and \(\dot{\boldsymbol h}^m\) boils down to two tasks: i) controlling the source-term error \(\boldsymbol{ \dot Mh^m}-\Pi_m\boldsymbol{ \dot Lh}\); and ii) controlling the discrepancy between the projected resolvents
\[
(I-\mathbb F_m^0)^{-1}\Pi_m
\quad\text{and}\quad
\Pi_m(I-\mathbb F_0)^{-1}.
\]
In the following these two ingredients are handled separately.

\subsection{Approximation of the source term, resolvent, and linear response}
In this section we will continue to profit from the compact notation of the global maps acting on sequence spaces. 
Recall that $\mathcal B_w, B_s$ denote the sequence spaces on which the global operator  $\mathbb F$ acts, and $\mathcal V_w,\mathcal V_s$ denotes the zero average spaces. Recall that $ \Pi_m $ and $\mathbb F_m$ denote the global projection and the global discretized operator.

Assume the system satisfy Assumption \ref{ass:elom} (exponential strong memory loss) and the standing hypotheses from the beginning of Section \ref{subsec:ulam_loss_of_memory}
and in particular that the projections $\pi_m$ satisfy (M), (P1), (P2), furthermore, we assume

\medskip

\noindent
\textbf{(H1)} For each \(n\in\mathbb Z\), \(m\in\mathbb N\) the limits
\[
\psi_{n}:=\lim_{\varepsilon\to0}
\frac{L_{n-1}^\varepsilon h_{n-1}-L_{n-1} h_{n-1}}{\varepsilon}
\quad
\psi_{n}^m:=\lim_{\varepsilon\to0}
\frac{M_{n-1}^\varepsilon h^m_{n-1}-M_{n-1} h^m_{n-1}}{\varepsilon}
\]
exists in \(B_s\), and
\[
\sup_{n\in\mathbb Z}\|\psi_n\|_s<\infty
\quad
\sup_{n\in\mathbb Z}\|\psi_n^m\|_s<\infty
.
\]

\noindent
\textbf{(H2)} For each \(n\in\mathbb Z\), the map
\[
\varepsilon\mapsto L_n^\varepsilon\in\mathcal L(B_w,B_w)
\]
is differentiable at \(\varepsilon=0\) in operator norm, that is, there exists
\[
\dot L_n\in\mathcal L(B_w,B_w)
\]
such that
\[
\left\|
L_n^\varepsilon-L_n^0-\varepsilon \dot L_n
\right\|_{B_w\to B_w}
=o(\varepsilon)
\qquad\text{as }\varepsilon\to0.
\]
Moreover,
\[
\sup_{n\in\mathbb Z}\|\dot L_n\|_{B_w\to B_w}<\infty.
\]
\noindent
\textbf{(H3)} There exists \(C_{\mathrm{reg}}>0\) such that
\[
\|L_n f\|_s\le C_{\mathrm{reg}}\|f\|_w
\qquad
\forall n\in\mathbb Z,\ \forall f\in B_w.
\]

\medskip


From the assumptions, by Proposition \ref{prop:ulam_inherits_memory}, and (H3), it directly follows 

\begin{lemma}\label{lem:H5}
The operators \(\mathbb F\), \(\mathbb F_m\), and \(\Pi_m\) preserve \(\mathcal V_w\), and there exist \(C^1_*,C^2_*>0\) and \(\rho_1,\rho_2\in(0,1)\) such that
\[
\|\mathbb F^k\boldsymbol u\|_{\mathcal B_w}
\le C_*^1\rho_1^k\|\boldsymbol u\|_{\mathcal B_w},
\qquad
\|\mathbb F_m^k\boldsymbol u\|_{\mathcal B_w}
\le C^1_*\rho_1^k\|\boldsymbol u\|_{\mathcal B_w},
\]
\[
\|\mathbb F^k\boldsymbol u\|_{\mathcal B_s}
\le C_*^2\rho_2^k\|\boldsymbol u\|_{\mathcal B_s},
\qquad
\|\mathbb F_m^k\boldsymbol u\|_{\mathcal B_s}
\le C^2_*\rho_2^k\|\boldsymbol u\|_{\mathcal B_s},
\]
for every \(k\ge0\), every \(m\), and every \(\boldsymbol u\) respectively \(\in\mathcal V_w\) or \(\in\mathcal V_s\) .
  
\end{lemma}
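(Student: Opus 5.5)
The plan is to check the three claims in turn: invariance of the zero-mass sequence spaces, the strong-norm decay, and finally the weak-norm decay, which will be obtained from the strong one through the regularisation (H3).

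\emph{Invariance.} Each $L_n$ is Markov, so it preserves mass. By (M) each $\pi_m$ preserves mass, hence so does $M_n^{(m)}=\pi_mL_n\pi_m$. Acting coordinatewise, $\mathbb F$, $\mathbb F_m$ and $\Pi_m$ therefore map zero-mass sequences to zero-mass sequences, in both $\mathcal B_w$ and $\mathcal B_s$.

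\emph{Strong-norm decay.} The key observation is that the $k$-th iterate of the global map is a shifted sequential block,
\[
(\mathbb F^k\boldsymbol u)_n=L^{(n-k,n-1)}u_{n-k},\qquad (\mathbb F_m^k\boldsymbol u)_n=M^{(m),(n-k,n-1)}u_{n-k}.
\]
For $\boldsymbol u\in\mathcal V_s$, Assumption~\ref{ass:elom} gives $\|L^{(n-k,n-1)}u_{n-k}\|_s\le Ce^{-\lambda k}\|\boldsymbol u\|_{\mathcal B_s}$ uniformly in $n$. For the discretized maps we invoke Proposition~\ref{prop:ulam_inherits_memory}, which yields $C\rho^k$ with constants uniform in $m$. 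Its hypothesis \eqref{eq:original_block_weak_small} follows from strong memory loss once $M$ is large, as noted in the remark after that proposition. The smallness \eqref{eq:epsilon_small_memory} holds for all $m\ge m_0$, since $\varepsilon_m\le C_1\delta_m\to0$ by Proposition~\ref{cor:abstract_sequential_projection}. Taking $C_*^2$ to be the larger constant and $\rho_2=\max(e^{-\lambda},\rho)$ gives the strong estimates. The phrase ``every $m$'' should be read as ``every $m\ge m_0$'': the finitely many small $m$ are either excluded or absorbed into the constants only if each such discretization separately has memory loss.

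\emph{Weak-norm decay via (H3).} For $\boldsymbol u\in\mathcal V_w$ and $k\ge1$, write $\mathbb F^k\boldsymbol u=\mathbb F^{k-1}(\mathbb F\boldsymbol u)$. By (H3), $\|(\mathbb F\boldsymbol u)_n\|_s=\|L_{n-1}u_{n-1}\|_s\le C_{\mathrm{reg}}\|\boldsymbol u\|_{\mathcal B_w}$, and $\mathbb F\boldsymbol u\in\mathcal V_s$. Hence
\[
\|\mathbb F^k\boldsymbol u\|_{\mathcal B_w}\le\|\mathbb F^{k-1}\mathbb F\boldsymbol u\|_{\mathcal B_s}\le C_*^2\rho_2^{k-1}C_{\mathrm{reg}}\|\boldsymbol u\|_{\mathcal B_w}.
\]
The case $k=0$ is trivial with constant $1$.

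For $\mathbb F_m$ we need a regularisation bound for $M_n^{(m)}$ itself. Using (P1) with $C_s,C_w\le1$ from (M),
\[
\|\pi_mL_n\pi_m f\|_s\le\|L_n\pi_mf\|_s\le C_{\mathrm{reg}}\|\pi_mf\|_w\le C_{\mathrm{reg}}\|f\|_w.
\]
This needs $\pi_m$ to map $B_s$ into $B_s$, which (P1) provides, and (H3) only requires $\pi_mf\in B_w$. The same argument then gives the weak decay for $\mathbb F_m$. We set $\rho_1=\rho_2$ and $C_*^1=\max(1,C_*^2C_{\mathrm{reg}}/\rho_2)$.

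\emph{Main obstacle.} The delicate point is the uniformity in $m$ of the discretized strong memory loss. It rests entirely on verifying the hypotheses of Proposition~\ref{prop:ulam_inherits_memory}: the common Lasota--Yorke inequality with $\lambda_1<1$ (under (H3) one may even take $\lambda_1=0$), the block bound \eqref{eq:original_block_weak_small}, and $\varepsilon_m$ small. It also forces the restriction to $m\ge m_0$ noted above. Everything else is bookkeeping with the shift structure of $\mathbb F^k$.
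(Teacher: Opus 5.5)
Your proof is correct and follows the route the paper indicates. The paper only asserts that the lemma ``directly follows'' from the assumptions, Proposition~\ref{prop:ulam_inherits_memory} and (H3); your write-up fills this in with strong memory loss for $\mathbb F$ from Assumption~\ref{ass:elom}, uniform strong memory loss for $\mathbb F_m$ from Proposition~\ref{prop:ulam_inherits_memory}, and weak-norm decay from one regularising step via (H3), which you correctly transfer to $M_n^{(m)}$ using (P1) and (M). Your caveat is also justified: Proposition~\ref{prop:ulam_inherits_memory} only covers $m$ satisfying \eqref{eq:epsilon_small_memory}, so ``every $m$'' in the statement should be read as ``every $m\ge m_0$''.
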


In the following lemma we start addressing the approximation of the linear response by its two main ingredients, i.e. the derivative operators defined in {\bf (H2)}, and the resolvents, $(I-\mathbb F)^{-1}, (I-\mathbb F_m)^{-1}$ whose existence on $\mathcal{V}_w$, $\mathcal{V}_s$ is granted by Lemma \ref{lem:H5}.

\begin{lemma}[Approximation of source terms and resolvents]\label{lem:source_resolvent_approx}
Assume the standing hypotheses from the beginning of Section \ref{subsec:ulam_loss_of_memory}  and \textbf{(H1)}--\textbf{(H3)}. Let
\(
\boldsymbol h
\)
be the equivariant family for \((L_n)_{n\in\mathbb Z}\), and let
\(
\boldsymbol h^m
\)
be an equivariant family for \((M_n)_{n\in\mathbb Z}\). Let $\psi_n, \psi_n^m, \dot M_n $ as defined at beginning of Section \ref{S4}.
Then the following hold.

\smallskip

\noindent
\textup{(i)} For every \(n\in\mathbb Z\),
\[
\psi_n^m-\pi_m\psi_n
=
\dot M_{n-1}\bigl(h_{n-1}^m-\pi_m h_{n-1}\bigr)
+
\Bigl(\dot M_{n-1}(\pi_m h_{n-1})-\pi_m\dot L_{n-1}h_{n-1}\Bigr),
\]
and therefore
\[
\|\boldsymbol\psi^m-\Pi_m\boldsymbol\psi\|_{\mathcal B_w}\to0
\]
whenever
\[
\sup_{n\in\mathbb Z}\|h_n^m-\pi_m h_n\|_w\to0.
\]

\smallskip

\noindent
\textup{(ii)} There exists \(C>0\), independent of \(m\), such that
\[
\bigl\|(I-\mathbb F_m)^{-1}\Pi_m-\Pi_m(I-\mathbb F)^{-1}\bigr\|_{\mathcal V_s\to\mathcal B_w}
\le C\tau_m.
\]
In particular,
\[
\bigl\|(I-\mathbb F_m)^{-1}\Pi_m-\Pi_m(I-\mathbb F)^{-1}\bigr\|_{\mathcal V_s\to\mathcal B_w}\to0.
\]
\end{lemma}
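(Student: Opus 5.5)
We will use the definitions $\psi_n=\dot L_{n-1}h_{n-1}$ and $\psi^m_n=\dot M_{n-1}h^m_{n-1}$. These follow from \textbf{(H1)}--\textbf{(H2)}. Since $M^\varepsilon_n=\pi_mL^\varepsilon_n\pi_m$ with $\pi_m$ independent of $\varepsilon$, we also have $\dot M_n=\pi_m\dot L_n\pi_m$.

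\emph{Part (i).} The identity is purely algebraic. We write $\psi_n^m-\pi_m\psi_n=\dot M_{n-1}h^m_{n-1}-\pi_m\dot L_{n-1}h_{n-1}$ and add and subtract $\dot M_{n-1}(\pi_mh_{n-1})$.

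For the estimate, the first term is bounded by $\|\dot M_{n-1}\|_{B_w\to B_w}\|h^m_{n-1}-\pi_mh_{n-1}\|_w$. Moreover $\|\dot M_{n-1}\|_{B_w\to B_w}\le C_w^2\sup_n\|\dot L_n\|_{B_w\to B_w}$, which is uniform in $n,m$ by \textbf{(P1)} and \textbf{(H2)}.

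For the second term we use $\pi_m^2=\pi_m$:
\[
\dot M_{n-1}\pi_mh_{n-1}-\pi_m\dot L_{n-1}h_{n-1}=\pi_m\dot L_{n-1}(\pi_mh_{n-1}-h_{n-1}).
\]
By \textbf{(P1)}, \textbf{(H2)} and \textbf{(P2)}, its weak norm is at most $C_w\sup_n\|\dot L_n\|\,\delta_m\sup_n\|h_n\|_s$. The family $\boldsymbol h$ is bounded in $\mathcal B_s$, since it is an equivariant family. Hence this term tends to $0$ uniformly in $n$.

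Taking the supremum over $n$ gives $\|\boldsymbol\psi^m-\Pi_m\boldsymbol\psi\|_{\mathcal B_w}\le C\sup_n\|h^m_n-\pi_mh_n\|_w+C\delta_m$. This proves (i).

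\emph{Part (ii).} The plan is to factor the difference through both resolvents. Both resolvents exist: $(I-\mathbb F)^{-1}$ is defined on $\mathcal V_s$ by the strong Neumann series, and $(I-\mathbb F_m)^{-1}$ on $\mathcal V_w$ by the weak one, both from Lemma~\ref{lem:H5}. We have
\[
(I-\mathbb F_m)^{-1}\Pi_m-\Pi_m(I-\mathbb F)^{-1}=(I-\mathbb F_m)^{-1}\bigl[\Pi_m(I-\mathbb F)-(I-\mathbb F_m)\Pi_m\bigr](I-\mathbb F)^{-1}.
\]
Using $\mathbb F_m=\Pi_m\mathbb F\Pi_m$ (Proposition~\ref{prop:abstract_global_projection}(ii)) and $\Pi_m^2=\Pi_m$, the bracket simplifies to $\Pi_m\mathbb F(\Pi_m-I)$.

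Take $\boldsymbol u\in\mathcal V_s$ and set $\boldsymbol w=(I-\mathbb F)^{-1}\boldsymbol u$. By the strong bound in Lemma~\ref{lem:H5}, $\boldsymbol w\in\mathcal V_s$ and $\|\boldsymbol w\|_{\mathcal B_s}\le \frac{C^2_*}{1-\rho_2}\|\boldsymbol u\|_{\mathcal B_s}$. Then $(\Pi_m-I)\boldsymbol w$ has zero mass by \textbf{(M)}, and its weak norm is at most $\delta_m\|\boldsymbol w\|_{\mathcal B_s}$ by Proposition~\ref{prop:abstract_global_projection}(i). Applying $\Pi_m\mathbb F$ costs at most the factor $C_wC_L\le1$ in the weak norm, and keeps us in $\mathcal V_w$. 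Finally, $(I-\mathbb F_m)^{-1}$ is bounded on $\mathcal V_w$ uniformly in $m$ by $\frac{C^1_*}{1-\rho_1}$, again by Lemma~\ref{lem:H5}.

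Altogether the difference is bounded from $\mathcal V_s$ to $\mathcal B_w$ by $C\delta_m$, with $C$ independent of $m$. Here we read $\tau_m$ as the approximation size of the scheme. It is of order $\delta_m$, since $\tau_m=\sup_n\|M_n-L_n\|_{B_s\to B_w}\le C_1\delta_m$ by Proposition~\ref{cor:abstract_sequential_projection}, and both quantities tend to $0$. Alternatively, one can split the difference as
\[
\bigl[(I-\mathbb F_m)^{-1}-(I-\mathbb F)^{-1}\bigr]\Pi_m+\bigl[(I-\mathbb F)^{-1}\Pi_m-\Pi_m(I-\mathbb F)^{-1}\bigr]
\]
and use the second resolvent identity with $\mathbb F_m-\mathbb F$ to obtain a term controlled by $\tau_m$ directly.

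\emph{Main obstacle.} The delicate point is bookkeeping of spaces. We must check that every intermediate object lies in the zero-mass space on which the relevant Neumann series converges, and that the weak resolvent bound for $\mathbb F_m$ is uniform in $m$. Both rest on \textbf{(M)} and on the uniform memory loss inherited by the discretization through Proposition~\ref{prop:ulam_inherits_memory}, which is packaged in Lemma~\ref{lem:H5}. Note that \textbf{(H3)} enters only through Lemma~\ref{lem:H5}, which supplies the weak contraction of $\mathbb F$ and $\mathbb F_m$.
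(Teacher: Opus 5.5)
Part (i) is the same as the paper's argument. Part (ii) follows a different, cleaner route. The paper expands both resolvents as Neumann series and writes the difference as $\sum_{k\ge1}(\mathbb F_m^k\Pi_m-\Pi_m\mathbb F^k)$. It telescopes each term into $\sum_{j=0}^{k-1}\mathbb F_m^{k-1-j}(\mathbb F_m\Pi_m-\Pi_m\mathbb F)\mathbb F^j$, bounds it by $C_0k\tau_m\rho^{k-2}$, and sums over $k$. Your identity $(I-\mathbb F_m)^{-1}(\mathbb F_m\Pi_m-\Pi_m\mathbb F)(I-\mathbb F)^{-1}$ is exactly the resummed form of that double series. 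It uses the same defect operator and the same two inputs from Lemma~\ref{lem:H5}: strong decay of $\mathbb F$ on $\mathcal V_s$ and weak decay of $\mathbb F_m$ on $\mathcal V_w$. Your version gives an explicit constant (the product of the two resolvent bounds) without the $k\rho^k$ bookkeeping. It also makes clear that \textbf{(H3)} enters only through Lemma~\ref{lem:H5}. Your domain check is what makes the identity legitimate: $\Pi_m\boldsymbol w$ has zero mass by \textbf{(M)}, so $(I-\mathbb F_m)^{-1}(I-\mathbb F_m)\Pi_m\boldsymbol w=\Pi_m\boldsymbol w$.

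The step that does not work is the last one. You prove a bound $C\delta_m$ and then pass to the claimed $C\tau_m$ using $\tau_m\le C_1\delta_m$. That inequality points the wrong way: it says a $C\tau_m$ bound is \emph{stronger} than a $C\delta_m$ bound, so it cannot be deduced from one. For instance, for the Ulam projection and $L_nf=(\int_X f)\,\mathbf 1$, one has $\tau_m=0<\delta_m$.

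The repair is one line inside your own argument. Since $\pi_m^2=\pi_m$, the components of your bracket are $\pi_mL_{n-1}(\pi_m-I)w_{n-1}=\pi_m(M_{n-1}-L_{n-1})w_{n-1}$. Hence $\|\Pi_m\mathbb F(\Pi_m-I)\boldsymbol w\|_{\mathcal B_w}\le C_w\tau_m\|\boldsymbol w\|_{\mathcal B_s}\le\tau_m\|\boldsymbol w\|_{\mathcal B_s}$. Your factorization then gives the statement with $C=\frac{C^1_*C^2_*}{(1-\rho_1)(1-\rho_2)}$. This is the same estimate the paper uses for $\mathbb F_m\Pi_m-\Pi_m\mathbb F$.

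Your alternative split does not close the gap as written. The second resolvent identity does control $\bigl[(I-\mathbb F_m)^{-1}-(I-\mathbb F)^{-1}\bigr]\Pi_m$ by $\tau_m$. But the remaining commutator $(I-\mathbb F)^{-1}\Pi_m-\Pi_m(I-\mathbb F)^{-1}$ is left unestimated.

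The qualitative conclusion (convergence to $0$) already followed from your $C\delta_m$ bound. So the use of this lemma in Proposition~\ref{prop:linear_response_approx} is unaffected.
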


\begin{proof}
For part \textup{(i)}, add and subtract \(\dot M_{n-1}(\pi_m h_{n-1})\). Since
\[
\dot M_n=\pi_m\dot L_n\pi_m,
\]
we have
\[
\|\dot M_n\|_{B_w\to B_w}
\le
\|\pi_m\|_{B_w\to B_w}^2\|\dot L_n\|_{B_w\to B_w},
\]
uniformly in \(m\) and \(n\). Moreover,
\[
\dot M_{n-1}(\pi_m h_{n-1})-\pi_m\dot L_{n-1}h_{n-1}
=
\pi_m\dot L_{n-1}(\pi_m h_{n-1}-h_{n-1}),
\]
hence \textbf{(P1),(P2)} and \textbf{(H2)} imply
\[
\sup_{n\in\mathbb Z}
\|\dot M_{n-1}(\pi_m h_{n-1})-\pi_m\dot L_{n-1}h_{n-1}\|_w\to0.
\]
The claimed convergence of \(\boldsymbol\psi^m-\Pi_m\boldsymbol\psi\) follows.

For part \textup{(ii)}, Lemma \ref{lem:H5} implies that on \(\mathcal V_w\)
\[
(I-\mathbb F)^{-1}=\sum_{k\ge0}\mathbb F^k,
\qquad
(I-\mathbb F_m)^{-1}=\sum_{k\ge0}\mathbb F_m^k.
\]
Thus
\[
(I-\mathbb F_m)^{-1}\Pi_m-\Pi_m(I-\mathbb F)^{-1}
=
\sum_{k\ge1}\bigl(\mathbb F_m^k\Pi_m-\Pi_m\mathbb F^k\bigr).
\]
For \(k\ge1\), the telescoping identity gives
\[
\mathbb F_m^k\Pi_m-\Pi_m\mathbb F^k
=
\sum_{j=0}^{k-1}
\mathbb F_m^{k-1-j}(\mathbb F_m\Pi_m-\Pi_m\mathbb F)\mathbb F^j.
\]
Now, for \(\boldsymbol v\in\mathcal B_s\),
\[
\bigl((\mathbb F_m\Pi_m-\Pi_m\mathbb F)\boldsymbol v\bigr)_n
=
\pi_mL_{n-1}\pi_m v_{n-1}-\pi_mL_{n-1}v_{n-1},
\]
so item (b) of Proposition \ref{cor:abstract_sequential_projection} yields
\[
\|(\mathbb F_m\Pi_m-\Pi_m\mathbb F)\boldsymbol v\|_{\mathcal B_w}
\le \tau_m\|\boldsymbol v\|_{\mathcal B_s}.
\]
Combining this with \textbf{(H3)} and Lemma \ref{lem:H5} gives
\[
\|\mathbb F_m^k\Pi_m-\Pi_m\mathbb F^k\|_{\mathcal V_s\to\mathcal B_w}
\le C_0\,k\,\tau_m\,\rho^{k-2}
\qquad (k\ge1)
\]
for some \(C_0>0\) independent of \(m\) and \(k\). Summing in \(k\) yields the claim.
\end{proof}

\begin{proposition}[Approximation of the linear response]\label{prop:linear_response_approx}
Assume the standing hypotheses from the beginning of Section \ref{subsec:ulam_loss_of_memory}  and \textbf{(H1)}--\textbf{(H3)}.
Let
\[
\hat{\boldsymbol h}:=(I-\mathbb F)^{-1}\boldsymbol\psi,
\qquad
\hat{\boldsymbol h}^m:=(I-\mathbb F_m)^{-1}\boldsymbol\psi^m,
\]
then
\[
\|\hat{\boldsymbol h}^m-\Pi_m\hat{\boldsymbol h}\|_{\mathcal B_w}\to0
\qquad\text{as }m\to\infty.
\]
\end{proposition}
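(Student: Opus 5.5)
The natural route is to split the error into a source-term part and a resolvent part, and to control each with Lemma~\ref{lem:source_resolvent_approx}:
\[
\hat{\boldsymbol h}^m-\Pi_m\hat{\boldsymbol h}
=(I-\mathbb F_m)^{-1}\bigl(\boldsymbol\psi^m-\Pi_m\boldsymbol\psi\bigr)
+\Bigl[(I-\mathbb F_m)^{-1}\Pi_m-\Pi_m(I-\mathbb F)^{-1}\Bigr]\boldsymbol\psi .
\]
Before using this identity, two things must be checked. First, $\boldsymbol\psi$ and $\boldsymbol\psi^m$ have zero mass, since they are derivatives of differences of images of probability measures under Markov operators. Second, $\Pi_m$ preserves mass by (M). Together these give $\boldsymbol\psi\in\mathcal V_s$ by (H1), and $\boldsymbol\psi^m-\Pi_m\boldsymbol\psi\in\mathcal V_w$, so all resolvents are applied on the spaces where Lemma~\ref{lem:H5} makes them available.

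The second term is handled directly by Lemma~\ref{lem:source_resolvent_approx}(ii). Since $\boldsymbol\psi\in\mathcal V_s$ with $\|\boldsymbol\psi\|_{\mathcal B_s}=\sup_n\|\psi_n\|_s<\infty$ by (H1), its $\mathcal B_w$-norm is at most $C\tau_m\|\boldsymbol\psi\|_{\mathcal B_s}$. By Proposition~\ref{cor:abstract_sequential_projection}(ii), $\tau_m=O(\delta_m)\to0$.

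For the first term, Lemma~\ref{lem:H5} gives the uniform weak bound
\[
\|(I-\mathbb F_m)^{-1}\|_{\mathcal V_w\to\mathcal B_w}\le \sum_{k\ge0}C^1_*\rho_1^k=\frac{C^1_*}{1-\rho_1},
\]
independent of $m$. It therefore suffices to show $\|\boldsymbol\psi^m-\Pi_m\boldsymbol\psi\|_{\mathcal B_w}\to0$. By Lemma~\ref{lem:source_resolvent_approx}(i), this reduces to $\sup_n\|h^m_n-\pi_m h_n\|_w\to0$. I would bound
\[
\|h^m_n-\pi_m h_n\|_w\le\|h^m_n-h_n\|_w+\|h_n-\pi_m h_n\|_w .
\]
The second piece is at most $\delta_m\sup_n\|h_n\|_s$ by (P2). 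For the first piece I would apply Theorem~\ref{thm:lipschitz}, or Proposition~\ref{prop:projection_delta_log_delta}, to the pair $(L_n)$, $(M_n)$. The weak contraction comes from (M). Exponential memory loss of $(L_n)$ holds by assumption. The mixed closeness is $\tau_m\to0$. The uniform strong bounds $M_s,\widetilde M_s$ follow from the common Lasota--Yorke inequality \eqref{eq:common_LY_original}--\eqref{eq:common_LY_discretized}, which gives $\|h_n\|_s\le B/(1-\lambda_1)$ for equivariant probability families (iterate from far in the past using $\|\cdot\|_w$ of probabilities equal to $1$). Theorem~\ref{thm:lipschitz} would also need regularization of $M_n$, which follows from (H3) with (P1): $\|\pi_mL_n\pi_m f\|_s\le C_{\mathrm{reg}}\|f\|_w$. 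Alternatively, the $\delta\log\delta$ bound is already enough for convergence.

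The main obstacle I expect is this source-term step, specifically the use of Lemma~\ref{lem:source_resolvent_approx}(i). That statement is written with $\dot M_n$ acting on $h^m_{n-1}-\pi_mh_{n-1}$ in $B_w$, so I need $\dot M_n=\pi_m\dot L_n\pi_m$ to be bounded on $B_w$ uniformly in $m,n$ (from (H2) and (P1)), and I need (H1) and (H2) to identify $\psi_n=\dot L_{n-1}h_{n-1}$ and $\psi^m_n=\dot M_{n-1}h^m_{n-1}$. One must also check that $\pi_m\dot L_{n-1}(\pi_mh_{n-1}-h_{n-1})$ is small in $\mathcal B_w$, which needs $\|\pi_m h_{n-1}-h_{n-1}\|_w\le\delta_m\|h_{n-1}\|_s$ uniformly, again from the uniform strong bound on $\boldsymbol h$. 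Combining the two terms gives $\|\hat{\boldsymbol h}^m-\Pi_m\hat{\boldsymbol h}\|_{\mathcal B_w}\to0$, with in fact a rate $O(\delta_m)$ when the Lipschitz estimate is used.
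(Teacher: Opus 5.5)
Your proposal is correct and follows the paper's own argument: the same split into $(I-\mathbb F_m)^{-1}(\boldsymbol\psi^m-\Pi_m\boldsymbol\psi)$ plus the resolvent-discrepancy term applied to $\boldsymbol\psi$, handled by Lemma~\ref{lem:H5} and Lemma~\ref{lem:source_resolvent_approx}\,(i)--(ii). The one place you go further is that you actually justify $\sup_n\|h^m_n-\pi_m h_n\|_w\to0$, via the triangle inequality, (P2), and Theorem~\ref{thm:lipschitz} or Proposition~\ref{prop:projection_delta_log_delta} with $m$-uniform strong bounds from the common Lasota--Yorke inequality, whereas the paper only asserts this in one line from $h_n^m,h_n\in B_s$.
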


\begin{proof}
We remark that since $h_n^m,h_n\in B_s$ then  \[
\sup_{n\in\mathbb Z}\|h_n^m-\pi_m h_n\|_w\to0.
\]

Decompose
\[
\hat{\boldsymbol h}^m-\Pi_m\hat{\boldsymbol h}
=
(I-\mathbb F_m)^{-1}(\boldsymbol\psi^m-\Pi_m\boldsymbol\psi)
+
\Bigl((I-\mathbb F_m)^{-1}\Pi_m-\Pi_m(I-\mathbb F)^{-1}\Bigr)\boldsymbol\psi.
\]
By  Lemma \ref{lem:H5}, the operator \((I-\mathbb F_m)^{-1}\) is  bounded on \(\mathcal V_w\). The first term therefore tends to zero in the weak norm by Lemma~\ref{lem:source_resolvent_approx}\,\textup{(i)}, while the second tends to zero by Lemma~\ref{lem:source_resolvent_approx}\,\textup{(ii)}, since $\boldsymbol{\psi} \in \mathcal{B}_s$.
\end{proof}

\begin{remark}
The conclusion of Proposition~\ref{prop:linear_response_approx} is stated in the form
\[
\|\hat{\boldsymbol h}^m-\Pi_m\hat{\boldsymbol h}\|_{\mathcal B_w}\to0,
\]
rather than
\[
\|\hat{\boldsymbol h}^m-\hat{\boldsymbol h}\|_{\mathcal B_w}\to0.
\]

However, if, in addition, the linear response of the original system belongs to the strong sequence space, i.e.
\[
\hat{\boldsymbol h}\in\mathcal B_s,
\]
then the approximation property of the projections immediately gives
\[
\|\Pi_m\hat{\boldsymbol h}-\hat{\boldsymbol h}\|_{\mathcal B_w}\to0,
\]
and therefore
\[
\|\hat{\boldsymbol h}^m-\hat{\boldsymbol h}\|_{\mathcal B_w}\to0
\]
by the triangle inequality.

Even when \(\hat{\boldsymbol h}\notin\mathcal B_s\), the statement of Proposition~\ref{prop:linear_response_approx} remains useful. Indeed, it shows that the response of the reduced finite-dimensional model approximates the natural projection of the true response, which is exactly the quantity that can be represented at the chosen discretization level. In this sense, the result still provides a meaningful and computable approximation theorem, even in the absence of additional regularity of \(\hat{\boldsymbol h}\).
\end{remark}

\subsection{Uniformly positive smoothing kernels}
\label{subsec:uniformly_positive_smoothing}

We now describe a concrete class of sequential Markov operators for which the abstract framework developed above applies.
In the next section we will define a general class of random dynamical systems whose transfer operators satisfy the assumptions presented here.
The key ingredients are a uniform Doeblin condition, yielding exponential memory loss in a weak norm, and a uniform smoothing property, yielding regularization into a stronger space.

Let \(X\) be a compact \(C^1\) Riemannian manifold, and let \(m\) be the normalized volume measure on \(X\). We consider finite signed measures on \(X\) which are absolutely continuous with respect to \(m\), and identify such a measure \(\mu\ll m\) with its density \(f=d\mu/dm\). In particular, we set
\[
B_w:=\{\mu\ll m:\ d\mu/dm\in L^1(m)\},
\qquad
\|\mu\|_w:=\left\|\frac{d\mu}{dm}\right\|_{L^1(m)},
\]
and
\[
V_w:=\{\mu\in B_w:\ \mu(X)=0\}.
\]
As a strong space, one may take
\[
B_s:=\{\mu\ll m:\ d\mu/dm\in C^1(X)\},
\qquad
\|\mu\|_s:=\left\|\frac{d\mu}{dm}\right\|_{C^1(X)},
\]
with corresponding zero-mass space
\[
V_s:=\{\mu\in B_s:\ \mu(X)=0\}.
\]
With this convention, transfer operators may be viewed either as acting on measures or directly on their densities, with no ambiguity in notation.

For each \(n\in\mathbb Z\), let
\[
K_n(x,dy)=k_n(x,y)\,m(dy)
\]
be a Markov kernel on \(X\), and let the associated transfer operator act on densities by
\[
(L_n f)(y):=\int_X f(x)\,k_n(x,y)\,dm(x).
\]
Equivalently, if \(\mu\in B_w\) has density \(f=d\mu/dm\), then \(L_n\mu\) is the absolutely continuous measure with density \(L_n f\). In particular, each \(L_n\) is positive and preserves mass.

We assume the following.

\begin{assumption}[Uniform positivity and regularity]\label{ass:positive_smoothing_kernels}
There exist \(\alpha>0\) and \(C_{\mathrm{reg}}>0\) such that, for every \(n\in\mathbb Z\),
\begin{enumerate}
\item \(k_n(x,y)\ge 0\) and
\[
\int_X k_n(x,y)\,dm(y)=1
\qquad\forall x\in X;
\]
\item
\[
k_n(x,y)\ge \alpha
\qquad\forall x,y\in X;
\]
\item for every \(x\in X\), the map \(y\mapsto k_n(x,y)\) belongs to \(C^1(X)\), and
\[
\sup_{n\in\mathbb Z}\sup_{x\in X}\|k_n(x,\cdot)\|_{C^1(X)}\le C_{\mathrm{reg}}.
\]
\end{enumerate}
\end{assumption}

\begin{theorem}[Exponential memory loss and regularization]\label{thm:positive_smoothing_kernels}
Assume Assumption~\ref{ass:positive_smoothing_kernels}. Then the following hold.

\begin{enumerate}
\item[\textup{(i)}] For every \(j\in\mathbb Z\), \(k\ge1\), and every \(\mu,\nu\in B_w\) with \(\mu(X)=\nu(X)\),
\[
\|L^{(j,j+k-1)}(\mu-\nu)\|_w
\le
(1-\alpha)^k\|\mu-\nu\|_w.
\]
In particular, the sequential system has exponential memory loss on \(V_w\).

\item[\textup{(ii)}] There exists \(R>0\) such that for every \(n\in\mathbb Z\) and every \(\mu\in B_w\),
\[
\|L_n\mu\|_s\le R\|\mu\|_w.
\]
In particular, the system is uniformly regularizing from \(B_w\) to \(B_s\).

\item[\textup{(iii)}] Consequently, for every \(j\in\mathbb Z\), \(k\ge1\), and every \(\mu\in V_w\),
\[
\|L^{(j,j+k)}\mu\|_s
\le
R(1-\alpha)^k\|\mu\|_w.
\]
Thus the system satisfies the weak/strong hypotheses required in the abstract linear response framework developed above.
\end{enumerate}
\end{theorem}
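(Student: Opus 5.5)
The plan is to prove the three items in order: (i) is a Doeblin-type contraction in $L^1$, (ii) comes from differentiating under the integral sign, and (iii) composes the two. Throughout, measures are identified with their densities as in the setup. Since $m$ is normalized and $\int_X k_n(x,y)\,dm(y)=1$, the lower bound $k_n\ge\alpha$ forces $\alpha\le 1$, so $(1-\alpha)^k$ makes sense.

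For (i), let $f=d(\mu-\nu)/dm$, so that $\int_X f\,dm=0$. The key step is to subtract the uniform floor $\alpha$ from the kernel. Zero mass of $f$ gives
\[
(L_nf)(y)=\int_X f(x)\bigl(k_n(x,y)-\alpha\bigr)\,dm(x).
\]
Since $k_n-\alpha\ge 0$, taking absolute values and applying Fubini yields
\[
\|L_nf\|_{L^1}\le\int_X |f(x)|\int_X\bigl(k_n(x,y)-\alpha\bigr)\,dm(y)\,dm(x)=(1-\alpha)\|f\|_{L^1}.
\]
Each $L_n$ preserves mass, so $L_nf$ again has zero mass. Induction along the composition $L^{(j,j+k-1)}$ then gives the bound $(1-\alpha)^k$, uniformly in $j$.

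For (ii), we have pointwise
\[
|(L_nf)(y)|\le \sup_{x,y}|k_n(x,y)|\,\|f\|_{L^1}\le C_{\mathrm{reg}}\|f\|_{L^1}.
\]
For the derivative, work in local charts and differentiate under the integral sign. The difference quotients of $y\mapsto k_n(x,y)$ are bounded by $C_{\mathrm{reg}}$ by the mean value theorem, and $|f|$ is integrable, so dominated convergence gives $D(L_nf)(y)=\int_X f(x)\,D_yk_n(x,y)\,dm(x)$. A second application of dominated convergence, using continuity of $D_yk_n(x,\cdot)$ for each fixed $x$, shows that this derivative is continuous in $y$. Hence $L_nf\in C^1(X)$ and $\|L_nf\|_{C^1}\le C'C_{\mathrm{reg}}\|f\|_{L^1}$, where $C'$ depends only on how the $C^1$ norm on $X$ is defined. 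So $R:=C'C_{\mathrm{reg}}$ works for every $n$.

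The main obstacle is this step (ii). It requires joint measurability of $k_n$ and of $D_yk_n$ in $(x,y)$, so that Fubini and dominated convergence are legitimate, and it requires care with charts on the manifold. I will assume measurability as implicit in the notion of a Markov kernel with density. If measurability of $D_yk_n$ is delicate, I would instead bound the Lipschitz constant of $L_nf$ directly through the difference quotients, which only uses measurability of $k_n$.

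For (iii), write $L^{(j,j+k)}=L_{j+k}\circ L^{(j,j+k-1)}$. Apply (ii) to the outer operator and then (i) to the inner block, which is legitimate because $\mu\in V_w$. This gives $\|L^{(j,j+k)}\mu\|_s\le R(1-\alpha)^k\|\mu\|_w$. Using $\|\cdot\|_w\le\|\cdot\|_s$, it follows that the strong exponential memory loss of Definition~\ref{def:lom} holds with $\lambda=-\log(1-\alpha)$ and $C=\max\{1,R/(1-\alpha)\}$; the degenerate case $\alpha=1$ is trivial. Assumption~\textbf{(H3)} is exactly (ii). Finally, weak nonexpansion (Assumption~\ref{ass:weak_nonexp}) holds because each $L_n$ is a positive, mass-preserving integral operator, hence an $L^1$ contraction.
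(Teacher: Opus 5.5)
Your proposal is correct and follows essentially the same route as the paper: a Doeblin contraction in $L^1$ for (i), differentiation under the integral sign with the uniform $C^1$ kernel bound for (ii), and composing the two for (iii). You also make explicit several things the paper leaves implicit, namely the subtraction of the floor $\alpha$, the dominated-convergence justification, and the explicit constants $\lambda=-\log(1-\alpha)$, $C=\max\{1,R/(1-\alpha)\}$ giving strong memory loss on $V_s$.
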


\begin{proof}
Let \(\mu,\nu\in B_w\), and write
\[
h:=\frac{d(\mu-\nu)}{dm}\in L^1(m).
\]
If \(\mu(X)=\nu(X)\), then \(\int_X h\,dm=0\). Since \(L_n\) acts on densities by
\[
(L_n h)(y)=\int_X h(x)\,k_n(x,y)\,dm(x),
\]
the lower bound \(k_n(x,y)\ge\alpha\) implies the Doeblin minorization
\[
K_n(x,\cdot)\ge \alpha\,m(\cdot)
\qquad\forall n\in\mathbb Z,\ \forall x\in X.
\]
Hence, one obtains
\[
\|L_n h\|_{L^1}\le (1-\alpha)\|h\|_{L^1}
\qquad\text{whenever } \int_X h\,dm=0.
\]
Iterating gives
\[
\|L^{(j,j+k-1)}h\|_{L^1}
\le
(1-\alpha)^k\|h\|_{L^1}.
\]
Since \(\|\,\mu-\nu\,\|_w=\|h\|_{L^1}\), this proves \textup{(i)}.

To prove \textup{(ii)}, let \(\mu\in B_w\) and write \(f=d\mu/dm\in L^1(m)\). Then
\[
(L_n f)(y)=\int_X f(x)\,k_n(x,y)\,dm(x),
\]
so
\[
|L_n f(y)|
\le
\int_X |f(x)|\,|k_n(x,y)|\,dm(x)
\le
\Bigl(\sup_{x\in X}\|k_n(x,\cdot)\|_{C^0(X)}\Bigr)\|f\|_{L^1}.
\]
Taking the supremum in \(y\), we obtain
\[
\|L_n f\|_{C^0(X)}\le C_{\mathrm{reg}}\|f\|_{L^1}.
\]

Since \(y\mapsto k_n(x,y)\) is \(C^1\), differentiation under the integral sign yields
\[
D_y(L_n f)(y)=\int_X f(x)\,D_yk_n(x,y)\,dm(x),
\]
and therefore
\[
\|D(L_n f)\|_{C^0(X)}
\le
\Bigl(\sup_{x\in X}\|k_n(x,\cdot)\|_{C^1(X)}\Bigr)\|f\|_{L^1}
\le
C_{\mathrm{reg}}\|f\|_{L^1}.
\]
Thus
\[
\|L_n f\|_{C^1(X)}\le R\|f\|_{L^1}
\]
for some \(R>0\) independent of \(n\). Since \(d(L_n\mu)/dm=L_n f\), this is exactly
\[
\|L_n\mu\|_s\le R\|\mu\|_w,
\]
proving \textup{(ii)}.

Finally, \textup{(iii)} follows by combining \textup{(i)} and \textup{(ii)}. If \(\mu\in V_w\), then
\[
\|L^{(j,j+k)}\mu\|_s
=
\|L_{j+k}(L^{(j,j+k-1)}\mu)\|_s
\le
R\|L^{(j,j+k-1)}\mu\|_w
\le
R(1-\alpha)^k\|\mu\|_w.
\]
This completes the proof.
\end{proof}

\begin{remark}[Kernels from stochastic flows]
Kernels of this type also arise from stochastic differential equations on
manifolds.  Indeed, the time-one map of a stochastic flow does not assign a
single deterministic image to a point \(x\), but rather a transition
probability
\[
K(x,A)=\mathbb P(\Phi_1(x,\omega)\in A).
\]
When this transition probability admits a density \(p_1(x,y)\) with respect
to the Riemannian volume measure \(m\), the associated transfer operator is
\[
(Lf)(y)=\int_X f(x)p_1(x,y)\,dm(x).
\]
Thus \(p_1(x,y)\) is exactly a kernel of the form considered above.

For elliptic diffusion processes on manifolds, the existence of smooth
transition densities is a standard consequence of the regularity theory for
diffusions.
For stochastic differential equations and diffusion semigroups on manifolds,
see \cite[Chapters VIII--IX]{Elworthy1982}.  For Brownian motion,
heat kernels and their positivity on Riemannian manifolds, see \cite[Chapters 3--4]{Hsu2002}.  In particular, on a compact connected
manifold, once the transition density \(p_t(x,y)\) is continuous and strictly
positive for a fixed \(t>0\), compactness gives
\[
\inf_{x,y\in X}p_t(x,y)>0.
\]

\end{remark}

\subsection{A class of random systems satisfying the assumptions}
\label{subsec:random_systems_satisfying_assumptions}

We now describe a concrete class of systems to which the abstract theory applies. We consider sequential random maps on \(\mathbb S^1\) with i.i.d.\ additive noise, in the annealed setting.

Let \((\xi_n)_{n\in\mathbb Z}\) be i.i.d.\ random variables on \(\mathbb S^1\) with common density \(q\in C^2(\mathbb S^1)\), and assume that
\begin{equation}\label{eq:noise_positive_c2}
q(y)\ge \alpha>0
\qquad\text{for all }y\in\mathbb S^1.
\end{equation}
Let \((f_n^\varepsilon)_{n\in\mathbb Z}\) be a family of maps \(f_n^\varepsilon:\mathbb S^1\to\mathbb S^1\), and assume that
\begin{equation}\label{eq:fn_eps_c1}
f_n^\varepsilon=f_n+\varepsilon \dot f_n+r_n^\varepsilon,
\qquad
\sup_{n\in\mathbb Z}\|\dot f_n\|_{C^1}<\infty,
\qquad
\sup_{n\in\mathbb Z}\frac{\|r_n^\varepsilon\|_{C^1}}{|\varepsilon|}\xrightarrow[\varepsilon\to0]{}0.
\end{equation}

Consider the random dynamics
\[
X_{n+1}=f_n^\varepsilon(X_n)+\xi_n \quad (\mathrm{mod}\ 1).
\]
The associated annealed transfer operators are
\begin{equation}\label{eq:annealed_L_c2}
(L_n^\varepsilon\varphi)(y)
:=
\int_{\mathbb S^1}\varphi(x)\,q\bigl(y-f_n^\varepsilon(x)\bigr)\,dm(x).
\end{equation}
We take
\[
B_w=L^1(\mathbb S^1),
\qquad
B_s=C^1(\mathbb S^1),
\]
with the corresponding zero-mass spaces \(V_w\) and \(V_s\).

The positivity assumption \eqref{eq:noise_positive_c2} implies a uniform Doeblin minorization, since
\[
k_n^\varepsilon(x,y)=q\bigl(y-f_n^\varepsilon(x)\bigr)\ge \alpha
\qquad
\forall n\in\mathbb Z,\ \forall x,y\in\mathbb S^1.
\]
Hence, by theorem \ref{thm:positive_smoothing_kernels}  the system has exponential loss of memory in \(L^1\), uniformly in \(n\) and \(\varepsilon\).
 In particular, the unperturbed system satisfies Assumptions~\ref{ass:weak_nonexp} and~\ref{ass:elom}, and therefore admits a unique equivariant family
\[
\boldsymbol h=(h_n)_{n\in\mathbb Z}\in \mathcal P_s^{\mathbb Z}.
\]

Differentiating under the integral sign,
\[
\partial_y(L_n^\varepsilon\varphi)(y)
=
\int_{\mathbb S^1}\varphi(x)\,q'\bigl(y-f_n^\varepsilon(x)\bigr)\,dm(x),
\]
so that
\begin{equation}\label{deriva}
\|\partial_y(L_n^\varepsilon\varphi)\|_{C^0}
\le
\|q'\|_{C^0}\|\varphi\|_{L^1}.
\end{equation}

Moreover, the operators are uniformly regularizing from \(L^1\) to \(C^1\). Indeed, from \eqref{eq:annealed_L_c2} and \eqref{deriva},
\[
\|L_n^\varepsilon\varphi\|_{C^1}
\le
\|q\|_{C^1}\|\varphi\|_{L^1}.
\]
This implies the loss of memory in $C^1$.

We now verify that the family \((L_n^\varepsilon)_{n\in\mathbb Z}\) satisfies the differentiability assumptions introduced above. Since \(q'\in L^1(\mathbb S^1)\), translations are \(L^1\)-Lipschitz, and \eqref{eq:fn_eps_c1} implies that \(\varepsilon\mapsto L_n^\varepsilon\) is differentiable at \(\varepsilon=0\) in the operator norm of \(L^1\to L^1\), with
\[
(\dot L_n\varphi)(y)
=
-\int_{\mathbb S^1}\varphi(x)\,q'\bigl(y-f_n(x)\bigr)\,\dot f_n(x)\,dm(x),
\]
and
\[
\sup_{n\in\mathbb Z}\|\dot L_n\|_{L^1\to L^1}<\infty.
\]
Hence Assumption~\textbf{(H2)} holds.

Furthermore, since \(q\in C^2\) and \(\boldsymbol h\in\mathcal P_s^{\mathbb Z}\), the source term belongs to the strong space: for each \(n\in\mathbb Z\),
\[
\psi_n:=
\lim_{\varepsilon\to0}
\frac{L_{n-1}^\varepsilon h_{n-1}-L_{n-1}h_{n-1}}{\varepsilon}
\]
exists in \(C^1(\mathbb S^1)\), and is given by
\[
\psi_n(y)
=
-\int_{\mathbb S^1} h_{n-1}(x)\,q'\bigl(y-f_{n-1}(x)\bigr)\,\dot f_{n-1}(x)\,dm(x).
\]
Indeed,
\[
\partial_y\psi_n(y)
=
-\int_{\mathbb S^1} h_{n-1}(x)\,q''\bigl(y-f_{n-1}(x)\bigr)\,\dot f_{n-1}(x)\,dm(x),
\]
so
\[
\sup_{n\in\mathbb Z}\|\psi_n\|_{C^1}<\infty.
\]
Thus  assumption { \bf (H1)} is also satisfied.

\noindent{\bf Conclusion.} In conclusion, we have therefore verified that the family of systems considered here satisfies all the hypotheses needed to apply Proposition~\ref{prop:projection_delta_log_delta}, concerning the approximation of the equivariant family, and Proposition~\ref{prop:linear_response_approx}, concerning the approximation of the linear response. Since Section~\ref{subsec:ulam_bv_l1} shows that the classical Ulam discretization satisfies the abstract assumptions on the projections, it follows that, for this class of systems, the classical Ulam method yields a reliable (asymptotically correct) approximation both of the equivariant family and, for the perturbations introduced above, of the corresponding linear response.

\begin{figure}
    \centering
\includegraphics[width=1.\linewidth]{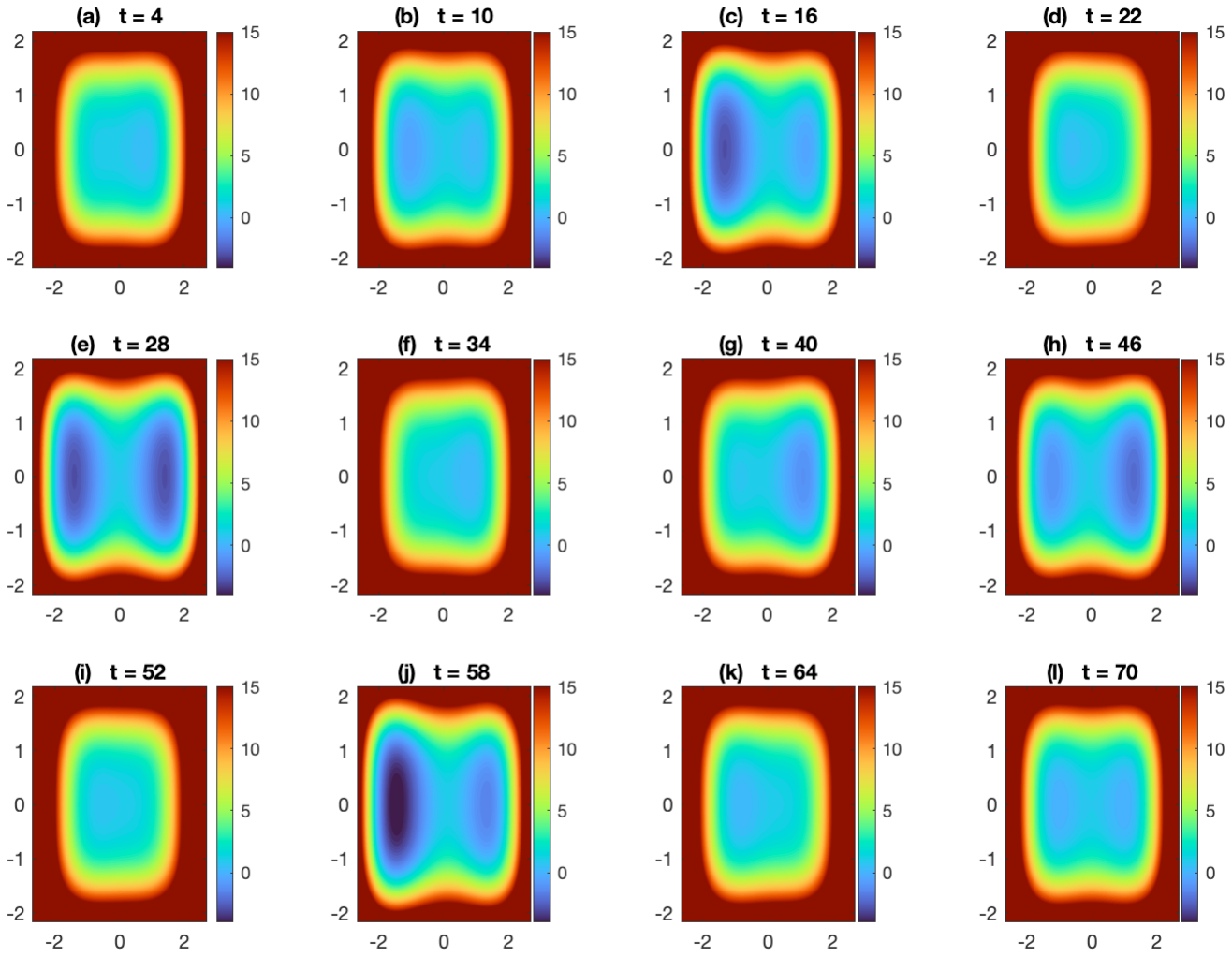}
    \caption{Illustration of the time-dependent potential $V(x,y,t)$. Full movie available online in the Supplementary Information.}
    \label{fig:potentialpanel}
\end{figure}

\section{Numerical Simulations}\label{numericalsim}
We consider the  Ito diffusion process described by the following stochastic differential equation:
\begin{align}
    \mathrm{d}x=-\partial_x V(x,y,t)+\sigma \mathrm{d}W_1\nonumber;\\
    \mathrm{d}y=-\partial_y V(x,y,t)+\sigma \mathrm{d}W_2;\label{ito}
\end{align}
where $$V(x,y,t)=(x^2-1)^2+y^2+y^4+2x^2\sin\left(\frac{2\pi t}{T_1}\right)+x\sin\left(\frac{2\pi t}{T_2}\right),$$ $T_1=10$, $T_2=7$, $\sigma=1.5$, whilst $W_1$ and $W_2$ are independent Wiener processes. The drift component  explicitly depends on time and has a periodicity of $T=T_1\times T_2= 70$. As a result of the strong time modulation, the qualitative properties of the potential change wildly with time, as illustrated in Fig. \ref{fig:potentialpanel}. The $V(x,y,t)$ is shown at each time unit in a movie included in the Supplementary Information. 

The evolution of the probability density with respect to Lebesgue obeys the following time-dependent Fokker-Planck equation
\begin{equation} 
\partial_t \rho(x,y,t)=\nabla\cdot (\nabla V(x,t,t) \rho(x,y,t)+\frac{\sigma^2}{2} \Delta  \rho(x,y,t)=\mathcal{L}_t \rho(x,y,t) \,\label{fp}
\end{equation}
where  $\mathcal{L}_t$ is the time-dependent Fokker-Planck operator. $P^{t_1,t_2}=\mathcal{T}\exp\int_{t_1}^{t_2}\mathrm{d}\tau \mathcal{L}_\tau$ is the Perron-Frobenius operator acting on densities, where $\mathcal{T}$ indicates time-ordering. 

In what follows, we do not consider the continuous time process given by Eqs. \ref{ito}-\ref{fp}, but we rather  consider the unit‑time skeleton of the process. The associated discrete‑time process with unit time step is defined as $(x_n,y_n)$. The discrete‑time Markov process is generated by the discrete-time Perron-Frobenius operator of the form $P^{p,q}$ where $p,q\in\mathbb{Z}$ $p\geq q$. Given the periodicity of the drift term, we have that $P^{p,q}=P^{p+nT,q+nT}$ $\forall$ $n\in\mathbb{Z}$.

Our data analysis proceeds as follows. We first generate, after discarding a transient, a long integration of the system given in Eq. \ref{ito} lasting $N_T\times T$ time units, where $N_T=5\times 10^6$. We adopt the Euler-Maruyama numerical scheme with time step $dt=0.01$, thus generating a bivariate time series with  $N_T T /dt$ time steps. We reduce the database to a time series of $N_T T$ pairs $(x_n,y_n)$ describing the state of the system at integer time units. 

\subsection{Estimating the Equivariant Measure}
We choose as phase space the rectangle $D=[-2.7,2.7]\times[-2.2,2.2]\subset\mathbb{R}^2$. All the datapoints are included in this set. Note that given the nature of the noise, positive - yet entirely negligible - mass is always present outside the considered domain, but this is deemed entirely irrelevant in the current analysis. Alternatively, we could compactify the space by setting the dynamics in the torus corresponding to $D$.

Let \(s_j>0\) denote a prescribed mesh size. In our analysis we choose $s_1=0.3$, $s_2=0.2$, $s_3=0.15$, $s_4=0.1$, $s_5=0.075$, $s_6=0.05$. 
We construct a regular Ulam partition of \(D\) with uniform side length
\(s_j\) in both coordinate directions as follows. Define uniform grids in the \(x\)- and \(y\)-directions by
\[
x_i = -2.7 + i h_j, \qquad i = 0,1,\dots,N_x,
\]
\[
y_k = -2.2 + k s_j, \qquad k = 0,1,\dots,N_y,
\]
where
\[
N_x = \left\lceil \frac{5.4}{s_j} \right\rceil,
\qquad
N_y = \left\lceil \frac{4.4}{s_j} \right\rceil.
\]
The corresponding regular Ulam partition is given by
\[
\mathcal{U}^j
=
\left\{
B^j_{i+1,k+1}
=
[x_i,x_{i+1}) \times [y_k,y_{k+1})
\;:\;
0 \le i \le` N_x-1,\;
0 \le k \le N_y-1
\right\},
\]
with the convention that sets on the upper boundary of \(D\) are closed on their right and top edges. We now choose a simpler single-index notation $B^j_{l,m}\rightarrow B^j_p$. The collection \(\mathcal{U}^j\) forms a finite measurable partition of \(D\),
that is,
\[
D = \bigcup_{p} B^j_p,
\qquad
B_{p} \cap B_{q} = \emptyset
\quad \text{for } p \neq q.
\]
Each interior element \(B_{p}\) is a square of side length \(s_j\) and Lebesgue measure \(s_j^2\).  We construct a time-dependent Markov model along the lines described in \cite{Lucarini2016,Lucarini2025,Lucarini2026}. Since our system is periodic with period T,  we need to define a different Markov model $\mathcal{M}^j_n$ for each of the $n=1,\ldots,70$ time units and for each Ulam partition $\mathcal{U}^j$. The model at time unit $70$ is identified with the model at time unit 0. 

\begin{figure}
    \centering\includegraphics[width=1.\linewidth]{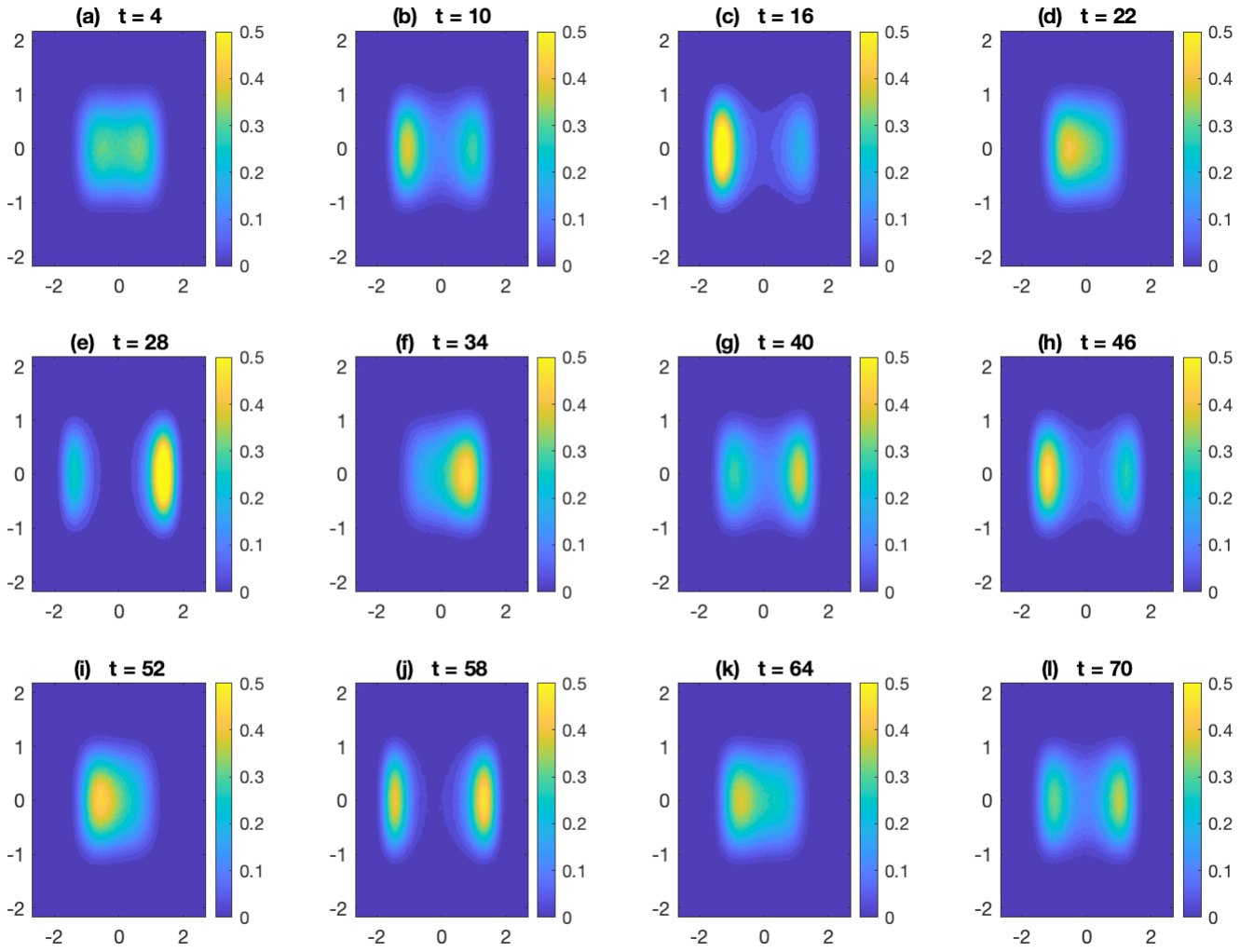}
    \caption{Illustration of the empirical equivariant density $\tilde{h}^6_t$  estimated using with the finest grid with spacing $s_6=0.05$. Full movie available online in the Supplementary Information.}
    \label{fig:equivariantpanel}
\end{figure}

Using standard techniques, we first construct  the Markov models using a frequentist approach by virtue of the classical maximum likelihood estimator.  Let $n^j_{n;p,q}$ be number of observed transitions from state $p$ to state $q$ belonging to the Ulam partition $\mathcal{U}^j$ at time $n$ collected across the  $N_T=5\times10^6$ simulated periods:
\[
\eta^j_{n;p,q} = \sum_{l=0}^{N_{T}-1} \mathbf{1}_{B^j_p}(x_{n+lT}) \text{ }  \mathbf{1}_{B^j_q}(x_{n+1+lT})
\]
and let $\gamma^j_{n;p}=\sum_{l=0}^{N_{T}-1} \mathbf{1}_{B^j_p}(x_{n+lT})$ be the total occupancy of state $p$ at time n.  The maximum likelihood estimate for the discrete Markov model is
\begin{equation}
    \mathcal{M}^j_{n;p,q}= \frac{\eta^j_{n;p,q}}{\gamma^j_{n;p}}.
\end{equation}

\begin{figure}
    \centering
    \includegraphics[width=1.\linewidth]{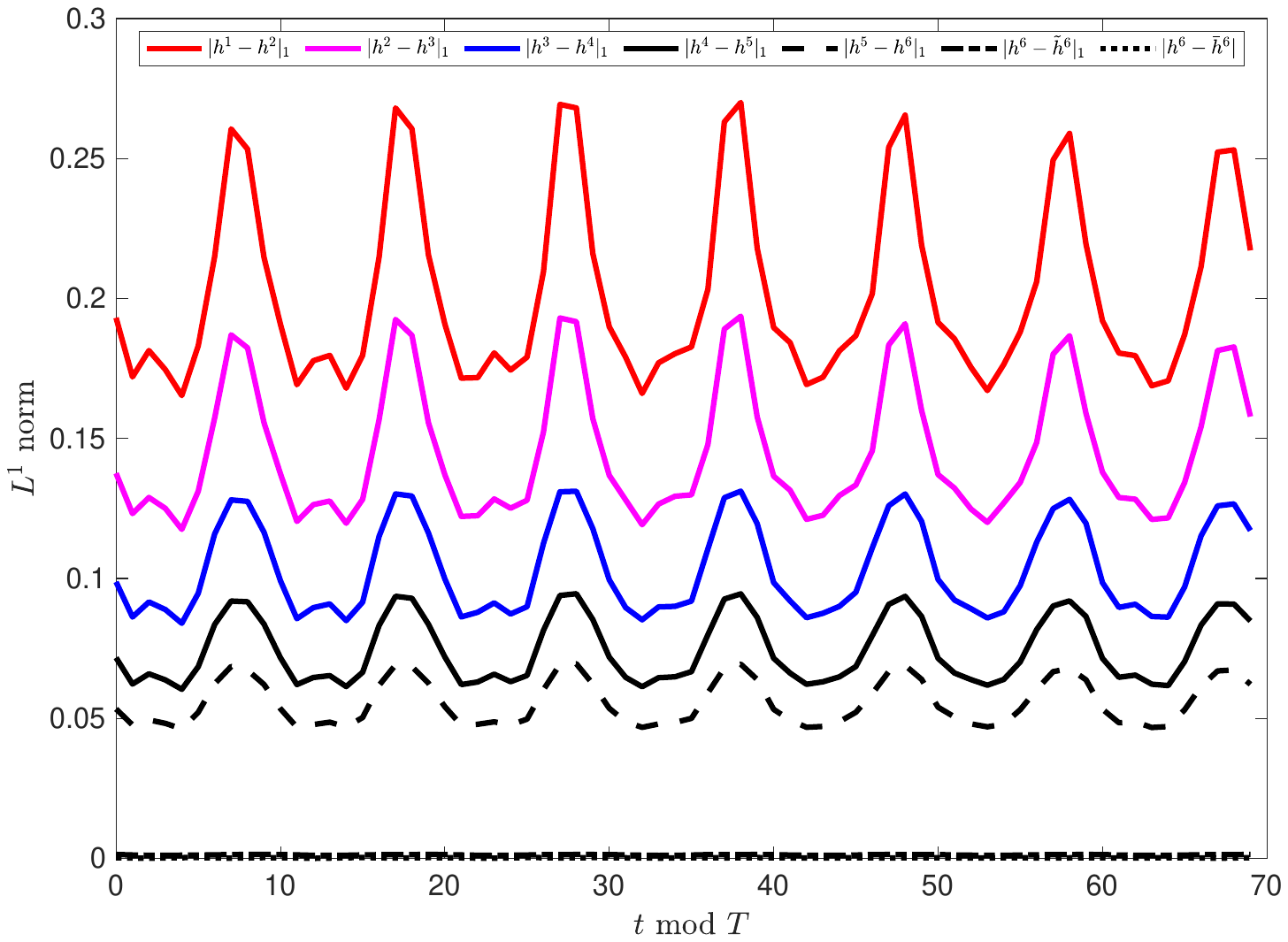}
    \caption{Convergence of the equivariant measure as one consider finer and finer Ulam griddings. We denote with $h^j(t)$ the estimate of the equivariant measure at time $t$ obtained with the gridding $s_j$. $s_1$ corresponds to cells of size $dx_1=0.3$, $s_2$ corresponds to cells of size $dx_2=0.2$, $s_3$ corresponds to cells of size $dx_3=0.15$, $s_4$ corresponds to cells of size $dx_4=0.1$, $s_5$ corresponds to cells of size $dx_5=0.07$, and $s_6$ corresponds to cells of size $dx_6=0.05$. $\tilde{h}_6(t)$ is the frequentist estimate obtained by box counting and corresponds to our ground truth. Finally $\bar{h}_6(t)$ is the estimate of the equivariant measure derived using the Markov model which has been estimated with the Bayesian correction, see text.}
    \label{fig:equivariant}
\end{figure}
The equivariant measure $h^j_t$ for at time $t$ mod $T$ for the Ulam partition $\mathcal{U}^j$ is then constructed as follows \cite{Lucarini2026}:
\begin{equation} h^j_n = \lim_{k\rightarrow\infty} \mathcal{M}^j_{n-1}\mathcal{M}^j_{n-2}\ldots \mathcal{M}^j_{n-k} \mu^j, \label{covariantMarkov}
\end{equation}
where $\mu^j$ is in principle any probability measure on $\mathcal{U}^j$. Practically, $k$ in the equation above is set to $2\times T$ and $\mu^j$ is the eigenvector of unitary eigenvalue of $\mathcal{M}^j_T$. Since our specific system has a rapid memory loss, the obtained equivariant measure is robust as long as $m \gtrsim T$.

Note that instead $\tilde{h}^j_t(p)=\gamma^j_{t;p}/N_T$  is the frequentist estimate of the equivariant density for the state $p$ at time $t$. An illustration of $\tilde{h}^6_t$ is shown in Fig. \ref{fig:equivariantpanel} 
, while a full movie is available online in the Supplementary Information. 

Figure \ref{fig:equivariant} shows that the $L^1$ norm of the difference between the equivariant measures obtained for progressively finer and finer partition decreases as the gridding becomes finer. We also remark that the agreement is worse - especially in the case of coarser partitions - with periodicity $T_1=10$, see the periodic spikes in the $L^1$ norm, because the corresponding time modulations lead to creation of finer structures in the equivariant measure, which are harder to capture at low resolution. Reassuringly, this effect is greatly reduced as we consider very fine partitions. Finally, we observe that when considering the finest gridding, the frequentist estimate of the equivariant measure (which we take as our ground truth) is in almost exact agreement with the equivariant measure constructed by iterating the time-dependent Markov model. This provides a strong numerical support of the convergence properties discussed above. The equivariant measures  $h^j_t$ for $j=1,\ldots,6$ are reported as movies in the Supplementary Information. 

Note that in order to increase the robustness of the estimated Markov model, one can perform a Bayesian correction for the estimate of the Markov chain $\mathcal{M}_j$. Given the  strong acting noise, our simulations can be assumed to consist of $N_T$ simulations each with duration $T$.  We follow \cite{Diaconis2006,Lucarini2026} and introduce a Dirichlet factor associated with creating $\sqrt{N_{T}}$ pseudo-observations distributed according to the estimated equivariant measure $h^j_t$. Hence, we obtain a revised estimate of the stochastic matrix at time t,
\begin{equation}
\mathcal{M}^j_{t;l,k}= \frac{n^j_{t;l,k}+\sqrt{N_{T}}h^j_k}{n^j_{t;l}+\sqrt{N_{T}}}.
\end{equation}
The updated estimate of $\mathcal{M}^j_{t}$ can then be used in Eq. \ref{covariantMarkov} to compute a new approximation of the equivariant measure, which we indicate as $\bar{h}^j_t$.  so that the procedure can be iterated. This in our case, no noticeable change in the equivariant measure is obtained after the first iteration, see Fig. \ref{fig:equivariant}, so that we can safely assume  $\bar{h}^j_t={h}^j_t$. The Bayesian correction, to the estimate of the Markov model, {which makes sure that the system obeys Assumption \ref{ass:positive_smoothing_kernels},}  will prove essential in the next step of our investigation.

We remark that in our Markov analysis we consider a number of states ranging from $\mathcal{O}(200)$ for the $h_1$ gridding up to $\mathcal{O}(5000)$ for the $h_6$ gridding. Hence, in terms of both spatial resolution and temporal length the current analysis is order of magnitude more extensive that the previous work where we had constructed time dependent Markov models and their response to perturbations \cite{Lucarini2026}.

\subsection{Estimating the Response to Perturbations}

We now investigate the response of the system to perturbations. We consider a rather simple scenario where the forcing corresponds to a small change in the potential $V$ of the form $V(x,y,t)\rightarrow V(x,y,t)+\epsilon W(x,y,t)$ where $W(x,y,t)=y$. This amounts to adding a small constant term of intensity $-\epsilon$ in the drift of the $y$ variable. This perturbation breaks the symmetry with parity -1 of the drift in the $y$  direction and is expected to create a displacement of probability towards the negative values of $y$. Since the extra forcing is time-independent, also the perturbed dynamics has periodicity $T$. We follow for the perturbed system the same protocol described in the previous subsection for the unperturbed case and generate a  time series of $N_T T$ pairs $(x^\epsilon_n,y^\epsilon_n)$ describing the state of the system at integer time units. We choose $\epsilon=0.2$.

Following the same procedure used in the previous subsection allows us to estimate, for each choice of $s_j$, $j=1\ldots,6$, the Markov model $\mathcal{M}^{j,\epsilon}_{t;p,q}$. The Markov models $\mathcal{M}^{j,\epsilon}_{t;p,q}$, $t=1,\ldots,70$ can then be used to obtain the equivariant measure $h^{j,\epsilon}_t$ using Eq. \ref{covariantMarkov}.  

We can then derive the following estimate for the first order correction to the stochastic matrix: 
\begin{equation}
{m}^{j}_{t;p,q}\approx\frac{
 \mathcal{M}^{j,\epsilon}_{t;p,q}-\mathcal{M}^{j}_{t;p,q}}{\epsilon}
\end{equation}
where ${m}^{j}_{t}$ is the linear correction to the stochastic matrix associated with the perturbation. The use of the Bayesian correction described above for the estimate of the Markov chain $\mathcal{M}_j$ is important to avoid pathologies when estimating $m^j_t$. Indeed, the Bayesian correction ensures that all columns of $\mathcal{M}^{j,\epsilon}$ and  $\mathcal{M}^{j}$ sum up to one, so that the columns of $m^j_n$ sum up to zero. The Bayesian correction regularises the computation of the Markov kernels and, since in our case $N_T=5
\times10^6$, has a negligible effect on the statistics of the system, as mentioned above.

As discussed in \cite{Lucarini2026}, by using recursion it is possible to find a simple explicit expression for $\hat h_t $ as follows:
\begin{equation}
\hat h^j_t  = \sum_{k=-\infty}^{\infty}\Theta(k) \mathcal{M}^j_{t-1}\ldots\mathcal{M}^j_{t-k} m^j_{t-k-1} \tilde h^j_{t-k-1} \label{formulaMarkovmeasures}
\end{equation}
where $\Theta(k)=1$ if $k\geq0$ and $\Theta(k)=0$ otherwise. Note that the presence of $\Theta(k)$ ensures that the system obeys causality. The summation above is truncated to $k_{max}=2\times T$. Our estimate for the linear response obtained using Eq. \ref{formulaMarkovmeasures} for the finest gridding $s_j=0.05$ is reported in Fig. \ref{fig:equivariantpanelbis}. As we see, the effect of the perturbation is to move mass from the positive to the negative y-domain. The linear response obtained using  
Eq. \ref{formulaMarkovmeasures} compares well with the finite  difference $\Delta h^{j}_t=(h^{j,\epsilon}_t-h^{j}_t)/\epsilon$, as illustrated in Fig. \ref{fig:equivariantpanel2} for the higher resolution case $j=6$. We have that $\max_t\left(|\Delta h^{6}_t-\hat{h}^{6}_t|_{L^1}/\left(|\hat{h}^{6}_t|_L^{1}+|\Delta h^{6}_t|_L^{1}\right)\right)\approx0.025$.

\begin{figure}
    \centering\includegraphics[width=1\linewidth]{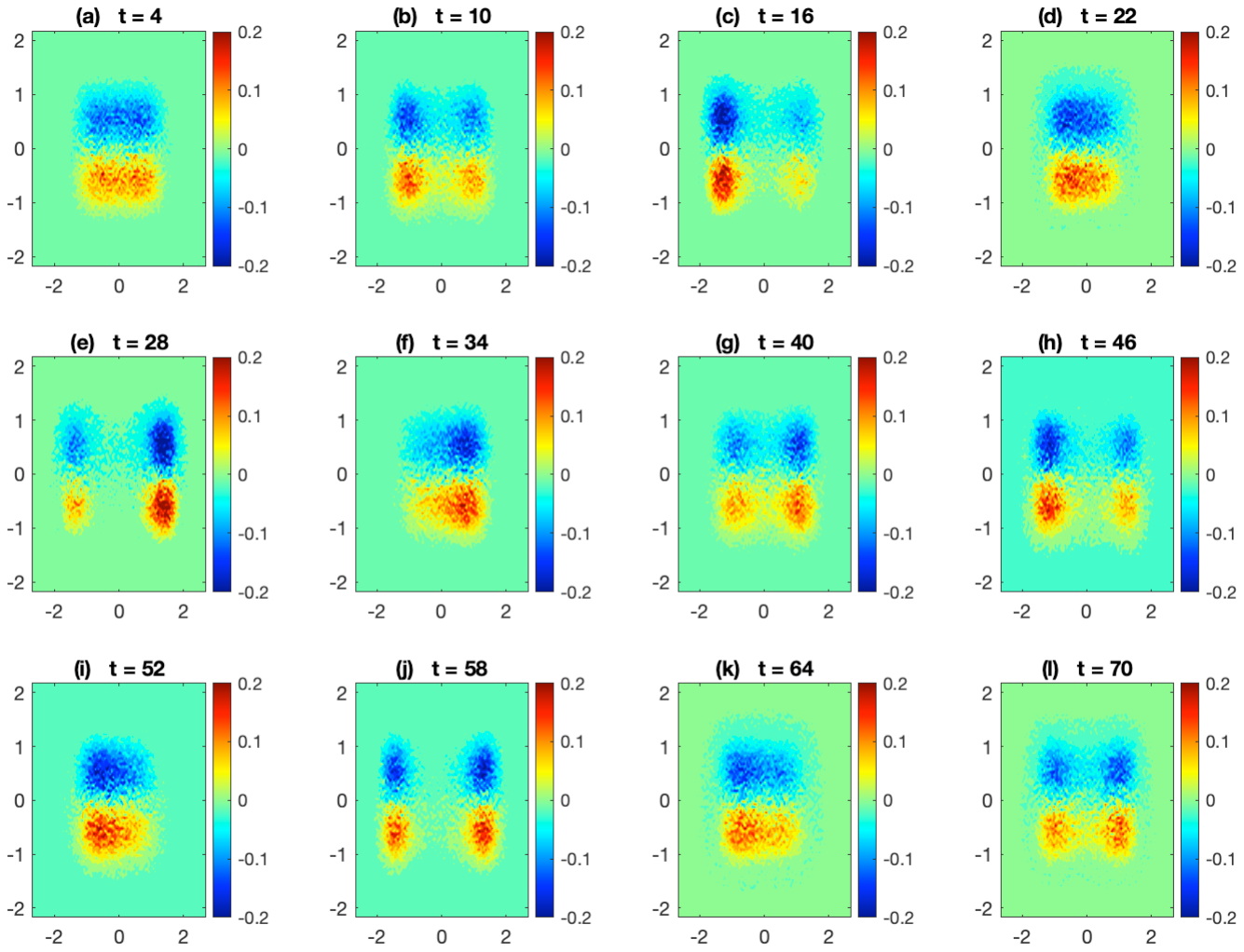}
    \caption{Illustration of the predicted linear response $\hat h^6_t$ obtained using Eq.\ref{formulaMarkovmeasures} for the finest grid with spacing $s_6=0.05$. Full movie available online in the Supplementary Information.}
    \label{fig:equivariantpanelbis}
\end{figure}

\begin{figure}
    \centering\includegraphics[width=1\linewidth]{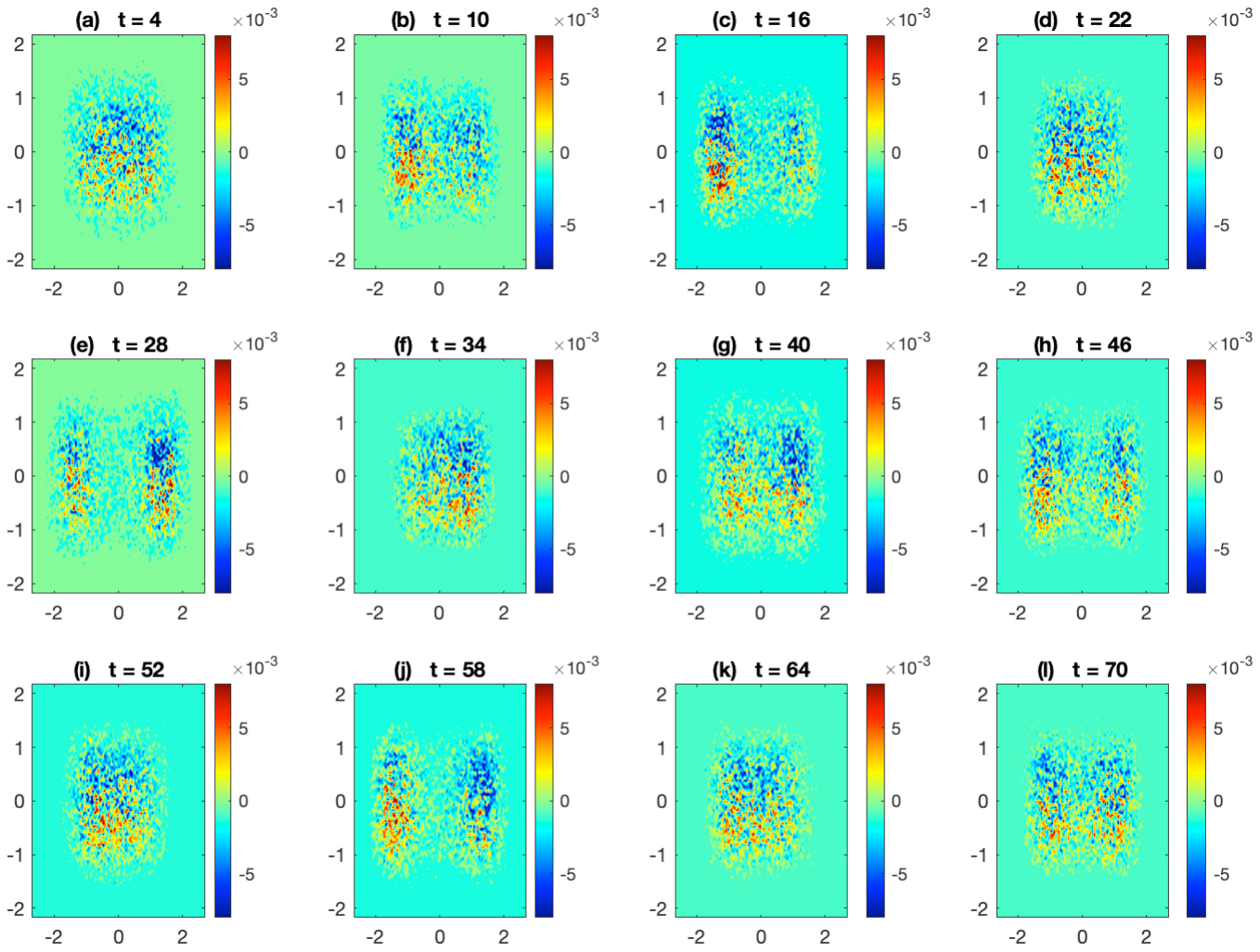}
    \caption{Illustration of $\Delta h^6_t-\hat h^6_t$. We consider here the finest grid with spacing $s_6=0.05$. Note the much smaller range of values for the field as compared to Fig. \ref{fig:equivariantpanel}, which shows that the linear response performs rather well. Full movie available online in the Supplementary Information.}
    \label{fig:equivariantpanel2}
\end{figure}

\section{The classical Ulam discretization in the \(BV\)–\(L^1\) setting}
\label{subsec:ulam_bv_l1}

\begin{figure}
    \centering
    \includegraphics[trim={5cm 9.0cm 5cm 6.9cm}, clip,width=0.45\linewidth]{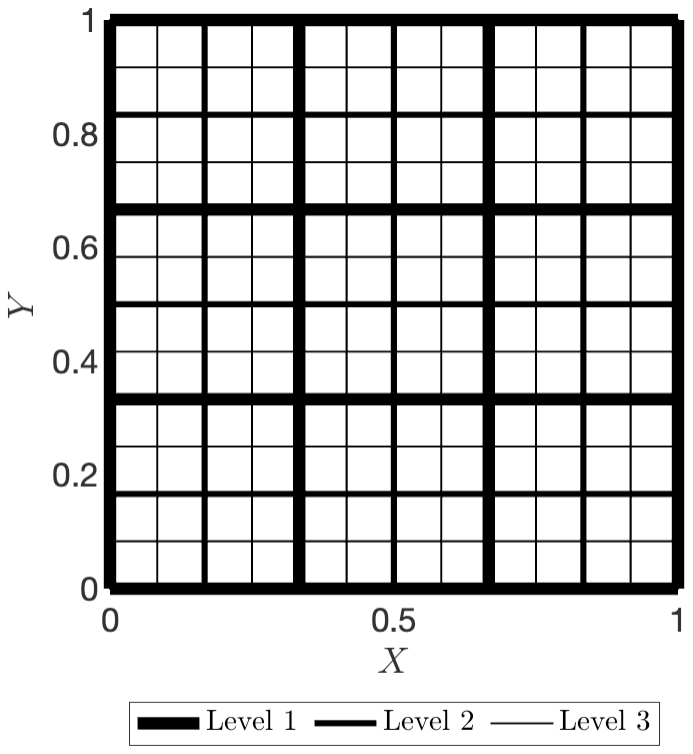}
\includegraphics[trim={5cm 7.5cm 4cm 9cm},clip,width=0.45\linewidth]{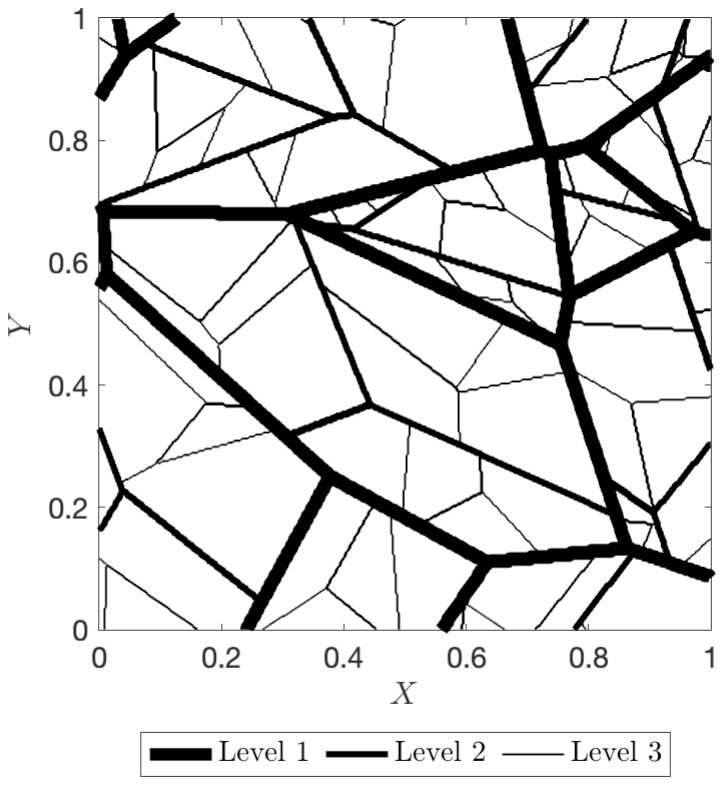}
    \caption{Two variants of partitions, where sequential refinements are indicated by the levels. Left panel: regular Ulam partition. Right panel: Voronoi tessellation. The units square in $\mathbb{R}^2$ is used as domain for illustrative purposes.}
    \label{fig:Ulam}
\end{figure}

In this section we discuss the classical Ulam discretization as a concrete example of a finite-rank projection scheme satisfying the abstract assumptions used in the main text. The purpose is twofold: on the one hand, it provides a meaningful and well-known class of discretizations to which the general theory applies; on the other hand, it clarifies the role of the abstract assumptions we made.

We work on the unit interval
\[
X=[0,1],
\]
endowed with Lebesgue measure \(m\). We identify absolutely continuous signed measures with their densities, and consider the weak and strong spaces
\[
B_w:=L^1([0,1]),
\qquad
\|f\|_w:=\|f\|_{L^1},
\]
and
\[
B_s:=BV([0,1]),
\qquad
\|f\|_s:=\|f\|_{BV}:=\|f\|_{L^1}+Var(f).
\]
The corresponding zero-mass spaces are
\[
V_w:=\left\{f\in L^1([0,1]):\int_0^1 f\,dm=0\right\},
\qquad
V_s:=BV([0,1])\cap V_w.
\]

Let
\[
\mathcal I_m=\{I_1,\dots,I_m\}
\]
be the uniform partition of \([0,1]\) into intervals of length
\[
|I_j|=\delta_m:=\frac1m.
\]
Let \(\mathcal F_m\) be the \(\sigma\)-algebra generated by \(\mathcal I_m\), and let
\[
\pi_m:L^1([0,1])\to L^1([0,1])
\]
be the conditional expectation with respect to \(\mathcal F_m\),
\[
\pi_m(f)=\mathbb E(f\mid\mathcal F_m).
\]
Equivalently, \(\pi_m(f)\) is the piecewise constant function which on each \(I_j\) is equal to the average of \(f\) on \(I_j\). Given a transfer operator \(L\), the associated classical Ulam discretization is
\[
M_m:=\pi_mL\pi_m.
\]

A graphical illustration of possible practical strategies for implementing the Ulam method at different levels of (sequential) refinement is given in Fig. \ref{fig:Ulam}, where we show both the case of regular gridding and of Voronoi tessellation \cite{Aurenhammer91} in the unit square in $\mathbb{R}^2$.

The choice \(B_s=BV\) is natural in this context: while \(\pi_m\) does not preserve smoother spaces such as \(W^{1,1}\), it does preserve \(BV\), and the corresponding approximation estimates are quantitative.

\begin{lemma}[Basic properties of the Ulam projection]
\label{lem:ulam_projection_bv_appendix}
For every \(m\ge1\), the operator \(\pi_m\) satisfies:
\begin{enumerate}
\item
\[
\|\pi_m f\|_{L^1}\le \|f\|_{L^1}
\qquad\forall f\in L^1([0,1]);
\]
\item
\[
\|\pi_m f\|_{BV}\le \|f\|_{BV}
\qquad\forall f\in BV([0,1]);
\]
\item
\[
\|\pi_m f-f\|_{L^1}\le \delta_m\,Var(f)
\le \delta_m\,\|f\|_{BV}
\qquad\forall f\in BV([0,1]).
\]
\end{enumerate}
\end{lemma}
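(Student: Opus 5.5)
The plan is to prove the three items directly from the explicit description of \(\pi_m\) as the operator that replaces \(f\) on each \(I_j\) by its average
\[
a_j(f):=\frac{1}{|I_j|}\int_{I_j} f\,dm .
\]
I will treat \(BV\) functions through a fixed good representative, and use the fact that \(Var\) on \([0,1]\) is additive over the partition intervals, counting the jumps at the partition points separately.

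\textbf{Item 1.} On each \(I_j\),
\[
\int_{I_j}|\pi_m f|\,dm=|I_j|\,|a_j(f)|\le \int_{I_j}|f|\,dm .
\]
Summing over \(j\) gives the \(L^1\) contraction. This is just Jensen's inequality for the conditional expectation.

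\textbf{Item 2.} The \(L^1\) part of the norm is handled by item 1, so it remains to show \(Var(\pi_m f)\le Var(f)\). The function \(\pi_m f\) is a step function, so its variation is
\[
Var(\pi_m f)=\sum_{j=1}^{m-1}|a_{j+1}(f)-a_j(f)| .
\]
Since \(a_j(f)\) is an average of \(f\) over \(I_j\), it lies between \(\inf_{I_j} f\) and \(\sup_{I_j} f\), where the essential inf and sup are taken for a good representative. Hence each average is attained, or approached arbitrarily well, by a value \(f(x_j)\) with \(x_j\in\overline{I_j}\). Choosing points \(x_1<x_2<\dots<x_m\), one in the closure of each interval, gives
\[
\sum_j|a_{j+1}-a_j|\le \sum_j|f(x_{j+1})-f(x_j)|+\eta\le Var(f)+\eta
\]
for any \(\eta>0\). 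Letting \(\eta\to0\) gives the claim.

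The one delicate point, and the main obstacle, is realizing each average as a point value. I would handle it either with a right-continuous representative and the intermediate value property of averages (the average lies in the closed convex hull of the essential range on \(I_j\)), or by first proving the bound for \(f\in C^1\) and passing to the limit using \(L^1\) lower semicontinuity of \(Var\).

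\textbf{Item 3.} On each \(I_j\), the average \(a_j(f)\) lies within the essential range of \(f\) on \(I_j\). Therefore \(|f(x)-a_j(f)|\le Var_{I_j}(f)\) for a.e. \(x\in I_j\), which gives
\[
\int_{I_j}|f-\pi_m f|\,dm\le |I_j|\,Var_{I_j}(f)=\delta_m Var_{I_j}(f).
\]
Summing over \(j\) and using the superadditivity \(\sum_j Var_{I_j}(f)\le Var(f)\) (variations over disjoint subintervals add up to at most the total) yields \(\|\pi_m f-f\|_{L^1}\le\delta_m Var(f)\). The final inequality \(\delta_m Var(f)\le\delta_m\|f\|_{BV}\) is immediate from the definition of the norm.

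Together these establish (P1) with \(C_w=C_s=1\) and (P2) with \(\delta_m=1/m\). Mass preservation (M) also holds, since \(\int\pi_m f=\int f\).
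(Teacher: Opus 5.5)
Your items 1 and 3 follow the paper's argument: the conditional expectation is an $L^1$ contraction, and the average on $I_j$ lies between the essential infimum and supremum of $f$ there, with these oscillations over the disjoint $I_j$ summing to at most $Var(f)$. In item 3, say ``between the essential infimum and supremum'' rather than ``within the essential range''; the bound itself is right.

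The gap is in item 2, at the step you flag. It is not true that each average $a_j(f)$ is attained, or approximated, by a point value of a fixed representative. Take $m=2$ and $f=\mathbf{1}_{[1/4,1]}$. Then $a_1(f)=1/2$, while the right-continuous representative takes only the values $0$ and $1$ on $\overline{I_1}=[0,1/2]$. Knowing that the average lies in the closed convex hull of the essential range does not put it near the range when $f$ jumps inside $I_j$, so your first remedy fails as stated. It can be rescued: at a jump point, assign any value between the two one-sided limits. This gives another good representative, whose pointwise variation still equals $Var(f)$. Its filled-in graph has the intermediate value property, so $a_j$ is approximated by a value it takes at an interior point of $I_j$, and the choices in different $I_j$ do not interfere.

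Your second remedy is the right idea but uses the wrong tool. From $Var(\pi_m f_k)\le Var(f_k)$ and $f_k\to f$ in $L^1$, lower semicontinuity controls only the left-hand side; there one even has $Var(\pi_m f_k)\to Var(\pi_m f)$, since only $m$ averages are involved. On the right-hand side it gives $Var(f)\le\liminf_k Var(f_k)$, which is the wrong direction. You need approximants with $\limsup_k Var(f_k)\le Var(f)$, for instance mollifications of the extension of $f$ by its boundary values. These are smooth and satisfy $Var_{[0,1]}(f_k)\le Var(f)$. For them the mean value theorem for integrals gives $a_j(f_k)=f_k(x_j)$ with $x_1\le\dots\le x_m$, and your chain estimate closes.

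With this repair your route is more careful than the paper's, which only says that each jump of $\pi_m f$ is bounded by the oscillation of $f$ on ``the corresponding region''. If that region is $I_j\cup I_{j+1}$, summing double-counts and yields only $2\,Var(f)$. For example, take $m=3$ and $f$ increasing from $0$ to $1$ inside $I_2$, constant on $I_1$ and $I_3$. Your single ordered chain of points is what removes the factor $2$.

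A representative-free alternative is duality. Write $\int(\pi_m f)\phi'=\int f\,\pi_m(\phi')=\int f\,\tilde\phi'$, where $\tilde\phi$ is the piecewise-linear interpolant of $\phi$ at the partition nodes. It is still bounded by $1$ and vanishes at $0$ and $1$, so taking the supremum over $\phi$ gives $Var(\pi_m f)\le Var(f)$.
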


\begin{proof}
The \(L^1\)-contraction follows from the contractivity of conditional expectation.

For the \(BV\) estimate, note that \(\pi_m f\) is obtained by averaging \(f\) on the partition elements. The jump of \(\pi_m f\) across adjacent intervals is therefore bounded by the oscillation of \(f\) on the corresponding region, and hence
\[
Var(\pi_m f)\le Var(f).
\]
Together with the \(L^1\)-contraction, this gives
\[
\|\pi_m f\|_{BV}\le \|f\|_{BV}.
\]

Finally, on each interval \(I_j\), the value of \(\pi_m f\) lies between the essential infimum and essential supremum of \(f\) on \(I_j\). Thus
\[
\int_{I_j}|\pi_m f-f|\,dm
\le
|I_j|\bigl(\sup_{I_j}f-\inf_{I_j}f\bigr).
\]
Summing over \(j\), we obtain
\[
\|\pi_m f-f\|_{L^1}
\le
\delta_m\sum_j\bigl(\sup_{I_j}f-\inf_{I_j}f\bigr)
\le
\delta_m\,Var(f),
\]
as claimed.
\end{proof}

The previous lemma shows that the classical Ulam projections satisfy the abstract projection assumptions used in the main text.

\begin{remark}
The classical Ulam scheme is the canonical example of the abstract projection framework considered in this paper. The same strong-weak mechanism, however, also applies to smoother finite-rank approximations once suitable analogues of the estimates in Lemma~\ref{lem:ulam_projection_bv_appendix} are available.
\end{remark}

\begin{remark}[A time-dependent empirical Ulam scheme]\label{last_rmk}
A realistic model of the empirical Ulam method applied in Section \ref{numericalsim} to derive a Markov transition matrix from long orbits in the non-autonomous context can be obtained by considering time-dependent finite-state Markov models adapted to the statistical state of the system at each time, where the transition probabilities between partition elements are described through averages with respect to the corresponding equivariant measure.
 In an idealized deterministic formulation, this corresponds to replacing the Lebesgue-based projection by a family of projections defined through conditional expectations with respect to the equivariant family. More precisely, let \((\nu_n)_{n\in\mathbb Z}\) be a reference equivariant family, let
\[
\mathcal I_m=\{I_1,\dots,I_{N_m}\}
\]
be a finite partition of the phase space, and let \(\mathcal F_m\) be the \(\sigma\)-algebra generated by this partition. For each \(n\in\mathbb Z\), one may then define a time-dependent projection by conditional expectation with respect to \(\nu_n\),
\[
\pi_{n,m}f:=\mathbb E_{\nu_n}(f\mid \mathcal F_m).
\]
Equivalently, whenever \(\nu_n(I_j)>0\) for all \(j\),
\[
(\pi_{n,m}f)(x)
=
\sum_{j=1}^{N_m}
\left(
\frac{1}{\nu_n(I_j)}\int_{I_j} f\,d\nu_n
\right)\mathbf 1_{I_j}(x).
\]
The associated reduced dynamics would then be given by the time-dependent discretization
\[
M_n^{(m)}:=\pi_{n+1,m}\,L_n\,\pi_{n,m}.
\]

A theory parallel to the one developed in the present paper appears possible for such time-dependent projection schemes. We leave this extension for future work, so as not to further broaden the scope of the paper.
\end{remark}

\section{Conclusions}\label{sec:conclu}

Nonautonomous systems arise naturally in many applications, but their statistical analysis and coarse-grained approximation remain much less developed than in the autonomous case \cite{Ashwin2026,Crisan2026}. In previous investigations we have defined under which conditions it is possible to develop a response theory that describes how their statistical properties change as a result of an additional weak forcing \cite{Lucarini2026,GL26}, in this work we addressed the theoretically and practically relevant question of the meaningfulness of studying such systems using a coarse-graining strategy described by  Ulam-like partitions. The matter is central to many applications because Markov modeling and Ulam-like partitions are explicitly or implicitly used to study a myriad of complex systems for which ultra higher resolution numerical modeling is unfeasible or in many data-driven settings.

Fortunately, our findings support a couple of positive and encouraging messages:
\begin{itemize}
\item coarse-graining procedures associated with the Ulam method and its generalizations discussed above allow one to capture accurately the equivariant measure in nonautonomous systems; and 
\item for systems whose evolution operators are regularizing, finite element reduction allows for capturing also the linear response operators for a rather general class of perturbations. To the best of our knowledge, a general approximation result of this type, formulated directly in terms of finite-state Markov reductions, had not previously been established in this form, even in the autonomous setting. 
\end{itemize}
The theoretical results are complemented by numerical experiments on a simple but instructive time-dependent diffusive model, illustrating the convergence of the reduced Markov description for both the equivariant family and its response to perturbations.

We hope that our work{, which bears direct relevance also for study of Koopman/Kolmogorov operator theory-based  approximate representations of dynamical systems \cite{Budisic2012,Mauroy2020} and of their response to perturbations \cite{Santos2022,lucarini2025generalframeworklinkingfree,zagli_SIAM:2026}}, will contribute to creating better tools for investigating fluctuations, response, and critical behaviour in complex systems.
Natural directions for future work include weakening the regularity assumptions used here, extending the approximation results to broader classes of nonautonomous systems, and developing a parallel theory for time-dependent or data-adapted projection schemes of the kind suggested in Remark \ref{last_rmk}.

\section*{Data Availability}
The data produced in the numerical simulations discussed in Sect. \ref{numericalsim} and the source files of all the figures included in the paper can be accessed here: \texttt{https://figshare.com/s/71a4aa77055665eb5594}.

\bibliographystyle{unsrt}

\bibliography{biblio}

\end{document}